\numberwithin{equation}{section}
\newcommand{\N}{\mathbb{N}}
\newtheorem{theorem}{Theorem}[section]
\newtheorem{proposition}[theorem]{Proposition}
\newtheorem{lemma}[theorem]{Lemma}
\newtheorem{corollary}[theorem]{Corollary}
\newtheorem{remark}[theorem]{Remark}
\theoremstyle{definition}
\newtheorem{rem}{Remark}[section]
\begin{document}

%
%

\title[A Variant of Harborth Constant]{A Variant of Harborth Constant}

\author{A.  Lemos, B.K. Moriya, A.O. Moura and A.T. Silva$^{\ast}$}
\thanks{$\ast$ The authors were partially supported by FAPEMIG grant APQ-02546-21 and FAPEMIG grant RED-00133-21.}
\address{Departamento de Matem\'{a}tica, Universidade Federal de Vi\c cosa, Vi\c cosa-MG, Brazil}

\email{abiliolemos@ufv.br\\bhavinkumar@ufv.br\\allan.moura@ufv.br}
\email{anderson.tiago@ufv.br}

\keywords{Finite abelian group, subsets, $k$-Harborth constant}

\subjclass[2010]{20K01,11B75}

\begin{abstract}
Let $G$ be a finite additive abelian group. For given $k$ a positive integer, the $k$-Harborth constant $g^k(G)$ is defined to be the smallest positive integer $t$ such that given a set $S$ of elements of $G$ with size $t$ there exists a zero-sum subset of size $k$. We find either the exact value of $g^k(G)$, or  lower and upper bounds for this constant for some groups. 
\end{abstract}

\maketitle

\section{Introduction}
\hspace{0.6cm}Let $G$ be a finite additive abelian group with exponent
$n$ and $S$ be a sequence over $G$. The enumeration of subsequences
with certain prescribed properties is a classical topic in Combinatorial
Number Theory going back to Erd\H{o}s, Ginzburg and Ziv (see \cite{EGZ,Ger1,Ger2})
who proved that $2n-1$ is the smallest integer, such that every sequence
$S$ over a cyclic group $C_{n}$ has a subsequence of length $n$
with zero-sum. This raises the problem of determining the smallest
positive integer $l$, such that every set $S=\{g_{1},\cdots, g_{l}\}$
has a nonempty zero-sum subset of size $\exp(G).$ Such an integer $l$ is called
the {\it Harborth constant of $G$} (see \cite{harborth} ), denoted by $g(G)$, since Harborth was the first to introduce this constant for the group $C_n^d$ and particular results were obtained by him. Some important results on this constant were obtained in the last 50 years. Kemnitz, see \cite{kemnitz}, proved the following trivial bound,

\[(n-1)2^{d-1}+1\le g(C_n^d)\le (n-1)n^{d-1}+1, n>2.\]

For $d=1,$ the upper bound above does not holds when $n$ is an even number. In fact, when $n$ is an even number, he proved that $g(C_n^d)\ge n\cdot 2^{d-1}+1$. In the same paper, Kemnitz also studied this constant for $d=2$ and computed the exact values for $p=3,5,7.$ Indeed, he proved that $g(C_p^2)= 2p-1,$ for $p\in\{3,5,7\}.$  
Several results were obtained over the years by several authors, for example $g(C_3^3)=10$, $g(C_3^4)=21$ and $g(C_3^5)=45$ (see \cite{bre,bb,fgr,harborth,kemnitz,kem2}). Gao and Thangadurai (2004), see \cite{gaoth}, proved that $g(C_p^2)= 2p-1$ for all primes numbers $p\geq67$ (this was later refined to $p\ge47,$ see \cite{gaogs}). They also proved that $g(C_n^2)= 2n+1$ for particular values of $n,$ where $n$ is an even number. Hence they propose the following conjecture:
\[g(C_n^2)=\begin{cases}2n-1,\mbox{ if }n\mbox{ is odd}\\
2n+1,\mbox{ if }n \mbox{ is even.}
\end{cases}\]
Actually, it is known (see Lemma 10.1 in \cite{gaog}) that $g(G)=|G|+1$ if and only if $G$ is an elementary $2$-group or a cyclic group of even order. Of course, when $G=C_2^r$ there is no zero-sum subset of size $2,$ so by vacuity $g(G)=|G|+1.$ Meshulam, see \cite{mes}, proved that $g(C_3^d)\le (1+o(1))\dfrac{3^d}{d}$. In 2013, Marchan {\it et al.}, see \cite{marchan}, proved that,
 \[g(C_2\oplus C_{2n})=\begin{cases}2n+3,\mbox{ if }n\mbox{ is odd}\\
2n+2,\mbox{ if }n \mbox{ is even.}
\end{cases}\]
 In 2019, Guillot \textit{et al.}, see \cite{guil}, proved that,
\[g(C_3\oplus C_{3p})=\begin{cases}3p+3,\mbox{ if }p\neq 3\\
13,\mbox{ if }p = 3,
\end{cases}\]
where $p$ is a prime number.

This motivates us to study the variant of Harborth constant which we will introduce next.
For given $k\in \N,$ we define $g^k(G)$ to be the smallest positive integer $t$ such that given a set $S$ of elements of $G$ with size $t$ there exists a zero-sum subset of size $k$. Here, we shall be studying this constant for all $k\in\mathbb{N}, k\ge3$ and $G=C_p,$ where $p$ is a prime number. We will also be proving some lower bounds and equalities for any $k\ge 1$ and any group $G.$ 

\section{Notations and terminologies}

\global\long\def\labelenumi{(\roman{enumi})}

\hspace{0.6cm} In this section, we will introduce some notations and terminologies. Let $\mathbb{N}_{0}$ be {\it the set
of nonnegative integers}. For integers $a,b\in\mathbb{N}_{0}$, we
define $[a,b]=\left\{ x\in\mathbb{N}_{0}:a\leq x\leq b\right\} $. For a set $S=\{g_1,\dots,g_m\} \subset G,$ we define 
\begin{enumerate}
\item $\left|S\right|=m$ {\it the size of $S$};
\item a {\it $k$-subsum} of $S$ is a sum of the form $\sigma\left(T\right)=\sum_{i\in I_{T}}g_{i}$, where $T$ is a subset of $S$ and $\emptyset\neq I_{T}\subset [1,m],$ with $|I_{T}|=k;$
\item $\sum_{\le k}\left(S\right)=\left\{ \sum_{i\in I}g_{i}:\emptyset\neq I\subseteq\left[1,m\right],\,|I|\le k\right\} $ to be the {\it set of nonempty subsums of maximum size $k$} of $S;$
\item $\sum_{ k}\left(S\right)=\left\{ \sum_{i\in I}g_{i}:\emptyset\neq I\subseteq\left[1,m\right],\,|I|=k\right\} $ to be the {\it set of nonempty $k$-subsums} of $S.$ 
\end{enumerate}

The set $S$ is called 
\begin{enumerate}
\item a {\it $m$-zero-sum set} if $\sigma\left(S\right)=0;$
\item a {\it $k$-zero-sum free set} if $0\notin\sum_{ k}\left(S\right).$ 
\end{enumerate}
\vspace{0cm}

\section{The $k$-Harborth Constant }\hspace{0.6cm}

In this section we will study a variant of Harborth. By $g^{k}(G)$ we denote the smallest positive integer $\ell$ such that any subset $S$ of $G,$ with $|S|\geq \ell,$ contains a $k$-zero-sum subset $T.$ We call this constant by $k$-{\it Harborth Constant}. Recall that $g^{k}(G)=g(G),$ when $k=\mbox{exp}(G),$ where $g(G)$ is an usual Harborth constant. More precisely, we shall be studying the underlined constant for all $k\in\mathbb{N}, k\ge3$ and $G=C_p,$ where $p$ is a prime number. We will also be proving some lower bounds and equalities for any $k\ge 1$ and any group $G.$ Clearly, $g^1(G)=|G|$. Hence from here on we shall be assuming $k\ge 2$. 

\subsection{General results} \hspace{0.6cm}

We start with a simple lemma.
\begin{lemma}
Given a finite abelian group $G,$ we have $g^2(G)=|A|+|B|+1,$ where $B=\{x\in G\ | \ 2x=0\}$ and $G\setminus B = A\cup -A.$

\end{lemma}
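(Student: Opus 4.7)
The plan is to reduce the statement to a pigeonhole argument on the partition of $G$ into $B$ and the orbits of the negation map.

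First I would unpack what a $2$-zero-sum subset of $S$ means: a subset $\{x,y\}\subseteq S$ with $x\ne y$ and $x+y=0$, i.e.\ $y=-x$ and $x\ne -x$. In particular, elements of $B$ cannot participate in a $2$-zero-sum subset on their own (because for $x\in B$ the only candidate partner is $x$ itself, which is not allowed since sets have distinct elements). Thus $S$ is $2$-zero-sum free if and only if $S$ contains no pair of the form $\{a,-a\}$ with $a\in G\setminus B$.

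For the lower bound $g^2(G)\ge |A|+|B|+1$, I would exhibit an explicit $2$-zero-sum free set of size $|A|+|B|$, namely $S_0=A\cup B$. Indeed, since $A\cap(-A)=\emptyset$ in the decomposition $G\setminus B=A\cup(-A)$, no element of $A$ has its negative in $A$; and no element of $B$ has a distinct negative. Hence $S_0$ contains no $2$-zero-sum subset, proving $g^2(G)>|A|+|B|$.

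For the upper bound, I would argue by pigeonhole using the partition
\[
G \;=\; B \;\sqcup\; \bigsqcup_{a\in A}\{a,-a\},
\]
which has exactly $|B|+|A|$ blocks. Given any $S\subseteq G$ with $|S|\ge |A|+|B|+1$, some block must contain at least two elements of $S$. A block of $B$ is a singleton, so the overfilled block must be one of the pairs $\{a,-a\}$ with $a\in A$, forcing both $a$ and $-a$ to lie in $S$. This produces a $2$-zero-sum subset of $S$, completing the proof.

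There is no real obstacle here; the only thing to get right is the bookkeeping that $A$ and $-A$ are disjoint and that elements of $B$ cannot form a $2$-zero-sum subset by themselves, after which the argument is a direct pigeonhole.
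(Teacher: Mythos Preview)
Your proof is correct and follows essentially the same approach as the paper: exhibit $S_0=A\cup B$ as a $2$-zero-sum free set for the lower bound, then use pigeonhole on the partition of $G$ into $B$ and the inverse pairs $\{a,-a\}$ for the upper bound. The paper's argument is terser (it does not spell out the pigeonhole step or the fact that elements of $B$ cannot participate), but the underlying idea is identical.
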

\begin{proof}
We consider the set $S=B\cup A$ and note that $S$ is a $2$-zero-sum free set, because all inverses belong $-A.$ Now, any set of size $|A|+|B|+1$ has at least two elements, say $x,y,$ such that $x+y=0.$ 
\end{proof}

\begin{remark}
If $|G|$ is odd, then $g^2(G)=\dfrac{|G|+3}{2}. $
Indeed one can see that the cardinalities of the sets defined in the lemma above are given by $|A|=\dfrac{|G|-1}{2}$ and $|B|=1.$
\end{remark}

\begin{proposition} \label{lower}
Let $G=C_{n_1}\oplus C_{n_2}\oplus\ldots \oplus C_{n_r}$, with $n_i|n_{i+1}$ and $n_1\geq 4$. Then 

\[g^3(G) \geq \left\{\begin{array}{ll}
1+\sum_1^t \left\lceil \dfrac{n_i}{3}\right\rceil+\sum_{t+1}^r \dfrac{n_i}{2} & \mbox{if }  2\not|n_t \mbox{ and } 2|n_{t+1},\\
& \\

2+\sum_1^r \left\lceil \dfrac{n_i}{3}\right\rceil &  \mbox{if }  n_r \mbox{ is odd.}\\
\end{array} \right.\]

\end{proposition}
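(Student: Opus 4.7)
The plan is to prove each lower bound by exhibiting a 3-zero-sum-free set $S \subset G$ whose size is one less than the claimed bound; then $g^3(G) > |S|$ forces $g^3(G) \geq |S|+1$. I will build $S$ componentwise using the direct-sum decomposition of $G$ and the standard generators $e_1,\dots,e_r$ of the cyclic factors. The core ingredient is a 3-zero-sum-free subset of each factor $C_{n_i}$ (identified with $\{0,1,\dots,n_i-1\}$): if $n_i$ is odd, I take the interval $A_i=\{1,2,\dots,\lceil n_i/3\rceil\}$, because any sum of three distinct elements falls in $[6,\,3\lceil n_i/3\rceil-3]$, and this range lies strictly inside $[1,n_i-1]$ whenever $n_i\geq 5$; if $n_i$ is even, I take the odd residues $B_i=\{1,3,\dots,n_i-1\}$, since a sum of three odd integers is odd and hence nonzero modulo the even integer $n_i$.

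For the first case, where $n_t$ is odd and $n_{t+1}$ is even, I set
\[ S \;=\; \{ae_i : 1\leq i\leq t,\ a\in A_i\}\ \cup\ \{ae_i : t+1\leq i\leq r,\ a\in B_i\}, \]
of cardinality $\sum_{i\leq t}\lceil n_i/3\rceil+\sum_{i>t}n_i/2$. For the second case, where all $n_i$ are odd, I take the analogous construction $\{ae_i : 1\leq i\leq r,\ a\in A_i\}$ and enlarge it by the single extra element $0\in G$, giving cardinality $1+\sum_i\lceil n_i/3\rceil$.

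The verification that these sets are 3-zero-sum free reduces in each case to classifying a triple by how many cyclic components it meets: a triple inside a single component is handled by the factorwise choice of $A_i$ or $B_i$; a triple with two elements in component $i$ and a third in a distinct component $j$ cannot sum to zero because that third element contributes a nonzero $e_j$-coefficient; and a triple spanning three distinct components is trivially nonzero. In the second case one extra family appears, namely $\{0,s,s'\}$ with sum $s+s'$; the only delicate sub-case is $s=ae_i$, $s'=be_i$ in a common component, where I need $a+b\not\equiv 0\pmod{n_i}$, which follows from $a+b\leq 2\lceil n_i/3\rceil-1<n_i$ for odd $n_i\geq 5$.

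I expect the main obstacle to be precisely this bookkeeping — checking the elementary inequalities $3\lceil n_i/3\rceil-3<n_i$ and (for the second case) $2\lceil n_i/3\rceil-1<n_i$ for every residue class of $n_i$ modulo $3$, which is also where the standing hypothesis $n_1\geq 4$ (ensuring $n_i\geq 4$ for all $i$, and $n_i\geq 5$ when $n_i$ is odd) is consumed.
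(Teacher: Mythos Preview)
Your proposal is correct and follows essentially the same approach as the paper: the paper also builds the zero-sum-free set componentwise, using $\{1,2,\dots,\lceil n_i/3\rceil\}\cdot e_i$ in odd factors (verifying, as you do, that the three largest elements sum to less than $n_i$) and the odd residues $\{1,3,\dots,n_i-1\}\cdot e_i$ in even factors, and then adjoins $0$ in the all-odd case. Your write-up is in fact more thorough than the paper's, which simply asserts that the union is $3$-zero-sum free without spelling out the cross-component case or the subcase $\{0,ae_i,be_i\}$ that you handle explicitly.
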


\begin{proof}
First we observe two statements:

I) For $n\geq 4,$ we consider 
\[
S=\left\{1,\dots,n-\left\lfloor \dfrac{2n}{3}\right\rfloor\right\}.
\]
Since \[n-\left\lfloor \dfrac{2n}{3}\right\rfloor+n-\left\lfloor \dfrac{2n}{3}\right\rfloor-1+n-\left\lfloor \dfrac{2n}{3}\right\rfloor-2<n, \mbox{ for any }n\ge 4.\]

II) For each $C_{n}$ with $n$ even, consider a set $S=\{1,3,5,\ldots,n-1\}$. Clearly, $S$ is a $3$-zero-sum free set.

Now, if $n_i$ is odd set $k_i=n_i-\left\lfloor \dfrac{2n_i}{3}\right\rfloor = \left\lceil \dfrac{n_i}{3}\right\rceil$, for each $i\ge 1$ and apply statement I) to get a subset $S_i=\prod_{j=1}^{k_i} je_i,$ where $S_i$ is a $3$-zero-sum free set.

If $n_i$ is even set $k_i=\dfrac{n_i}{2}$ and apply statement II) to get  a subset $S_i=\prod_{j=1}^{k_i} {(2j-1)}e_i,$ where $S$ is a $3$-zero-sum free set.

For the first inequality, the set $S=\prod_{1}^{r} S_i$ is a $3$-zero-sum free set.

For the second inequality, consider the set $S=\prod_{1}^{r} S_i.$ Then $0\cdot S$ is a $3$-zero-sum free set.

\end{proof}
%


\begin{proposition}\label{p5}
Let $G=C_{n_1}\oplus C_{n_2}\oplus \ldots\oplus  C_{n_t},\mbox{ where } n_i|n_{i+1}$. Then
$g^k(G) \geq \dfrac{|G|}{n_t}\cdot\left(\left\lfloor \dfrac{n_t}{k} \right\rfloor-\epsilon\right)+1$, where $k\in \mathbb{N}, k<n_t,$ and
$\epsilon=\begin{cases}
1, \mbox{ if }k|n_t\\
0, \mbox{ otherwise}
\end{cases}.$\\
\end{proposition}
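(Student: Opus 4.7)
The plan is to exhibit an explicit $k$-zero-sum free subset $S \subset G$ of size $\frac{|G|}{n_t}(\lfloor n_t/k\rfloor - \epsilon)$, which immediately gives the lower bound on $g^k(G)$.

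Write $G = H \oplus C_{n_t}$ where $H = C_{n_1}\oplus\cdots\oplus C_{n_{t-1}}$, so that $|H| = |G|/n_t$. First I would work inside the last coordinate alone. Set
\[
T = \{1, 2, \ldots, \lfloor n_t/k\rfloor - \epsilon\} \subset C_{n_t},
\]
and verify the following key property: \emph{any sum of exactly $k$ elements of $T$, with repetition allowed, is nonzero in $C_{n_t}$.} Indeed, such a sum lies in the integer interval $[k,\; k(\lfloor n_t/k\rfloor - \epsilon)]$. When $k \nmid n_t$ (so $\epsilon = 0$), we have $k\lfloor n_t/k\rfloor < n_t$, and when $k \mid n_t$ (so $\epsilon = 1$), we have $k(n_t/k - 1) = n_t - k < n_t$. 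In either case the sum lies strictly between $0$ and $n_t$, so it is not $0$ modulo $n_t$.

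Now lift to $G$ by taking
\[
S = \{(h,\tau) : h \in H,\ \tau \in T\},
\]
so that $|S| = |H|\cdot|T| = \frac{|G|}{n_t}(\lfloor n_t/k\rfloor - \epsilon)$. Given any $k$ distinct elements $(h_1,\tau_1),\ldots,(h_k,\tau_k)$ of $S$, their sum is $(h_1+\cdots+h_k,\ \tau_1+\cdots+\tau_k)$. Although the pairs are distinct, the second coordinates $\tau_1,\ldots,\tau_k$ form a multiset of $k$ elements of $T$, which by the property established above sums to a nonzero element of $C_{n_t}$. Thus the total sum in $G$ is nonzero, so $S$ is $k$-zero-sum free and the bound $g^k(G) \ge |S| + 1$ follows.

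The only subtle point — and the place where I expect to spend a line or two of care — is the observation that distinctness of the pairs $(h_i,\tau_i)$ does not force distinctness of the second coordinates $\tau_i$, so one really does need the stronger fact that no \emph{multiset} of $k$ elements of $T$ sums to zero, not merely no $k$-subset. That is precisely why the interval $T$ was chosen so small; the arithmetic checking of the endpoints in the two cases $k\mid n_t$ and $k\nmid n_t$ is the crux of the argument.
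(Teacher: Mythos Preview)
Your proof is correct and follows exactly the same construction as the paper: both take $S = H \times \{1,\ldots,\lfloor n_t/k\rfloor - \epsilon\}$ inside $H \oplus C_{n_t}$ and argue that the last coordinate of any $k$-sum is nonzero. Your write-up is in fact more careful than the paper's, since you make explicit the key point that one needs the sum of any $k$-\emph{multiset} from $T$ (not just any $k$-subset) to be nonzero, which the paper leaves implicit.
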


\begin{proof}
Consider a set \[S=\left\{(g,h)|g\in C_{n_1}\oplus C_{n_2}\oplus \ldots\oplus  C_{n_{t-1}}\mbox{ and }h\in \left[1,\left\lfloor \dfrac{n_t}{k}\right \rfloor-\epsilon\right]\right\},\]
where $\epsilon=\begin{cases}
1, \mbox{ if }k|n_t\\
0, \mbox{ otherwise}
\end{cases}.$\\
Notice that $|S|=\dfrac{|G|}{n_t}\cdot\left(\left\lfloor \dfrac{n_t}{k} \right\rfloor-\epsilon\right)$ and $S$ is a $k$-zero-sum free set. Hence we are done.
\end{proof}

\begin{remark}
Note that Proposition \ref{p5} gives far better lower bound compare to Proposition \ref{lower}, where $r\ge 2$. More precisely, for $n\ge 3r/2,\mbox{ with }r\ge 2$ we get a better bound.
\end{remark}

\subsection{The $4$-Harborth Constant for $C_{2}^{r}$}\hspace{0.6cm} 

We start with a proposition that provides a lower bound for $g^{4}(C_2^r).$

\begin{proposition}\label{t1}
We have $g^{4}(C_2^r)\geq r+\left\lfloor\frac{r}{2} \right\rfloor + \left\lfloor \frac{r-4}{3} \right\rfloor +1.$  
\end{proposition}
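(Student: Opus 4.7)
The proposition is a lower bound, so I will construct an explicit $4$-zero-sum free subset $S \subset C_2^r$ of size $r + \lfloor r/2\rfloor + \lfloor (r-4)/3\rfloor$. Identifying $C_2^r$ with $\mathbb{F}_2^r$ and writing the standard basis as $e_1,\ldots,e_r$, four distinct elements $a,b,c,d$ sum to zero iff $a+b+c=d$, equivalently iff the symmetric difference of their four supports is empty. My candidate set is
\[
S \;=\; \{e_1,\ldots,e_r\} \;\cup\; \{e_{2i-1}+e_{2i}: 1\le i\le\lfloor r/2\rfloor\} \;\cup\; U,
\]
with $U$ a collection of $\lfloor (r-4)/3\rfloor$ carefully chosen extra vectors.

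First, I would verify that the first two layers $S_0$ already form a $4$-zero-sum free set. A case analysis on the number $k\in\{0,1,2,3,4\}$ of pair-sums appearing in a hypothetical zero-sum 4-tuple shows that the symmetric difference of the four supports never vanishes: the pair-sum supports are pairwise disjoint of weight $2$ while basis vectors have weight $1$, so the resulting support cardinalities have the wrong parity/size in each case. This already yields the intermediate bound $g^4(C_2^r) \ge r + \lfloor r/2\rfloor + 1$.

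Next I would take $U$ to consist of weight-$4$ vectors whose supports are \emph{transversals} of the pair partition $\{\{2i-1,2i\}\}$, that is, contain at most one index from each pair. The key structural fact is that such a transversal is not a $3$-sum of elements of $S_0$: the only $3$-sum of $S_0$ that achieves weight $4$ is $e_i+e_j+(e_{2k-1}+e_{2k})$ with $\{i,j\}$ disjoint from $\{2k-1,2k\}$, and its support necessarily \emph{contains} the full pair $\{2k-1,2k\}$, violating transversality. Hence adjoining a single transversal to $S_0$ preserves the $4$-zero-sum free property. The main obstacle will be choosing $\lfloor (r-4)/3\rfloor$ \emph{mutually compatible} transversals---no $4$-zero-sum should arise among two, three, or four extras together with elements of $S_0$---which reduces to a small pairwise overlap condition on the supports (pairwise overlap $\le 1$ suffices for two-extra compatibility, with analogous constraints for triples). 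I would carry this out by induction on $r$ in blocks of $3$ indices, starting from the base case $r=4$ with $U=\emptyset$: at each step from $r$ to $r+3$ one adjoins three new basis vectors, the new pair-sum(s) (one or two depending on the parity of $r$), and one new transversal extra supported essentially on the three freshly added indices. Since the supply of weight-$4$ transversals grows rapidly with $\lfloor r/2\rfloor$ while only $\lfloor (r-4)/3\rfloor$ of them are needed, ample room is available and the verification at each step is a finite, explicit support-pattern check.
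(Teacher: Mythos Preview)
Your first two layers coincide exactly with the paper's construction, and your verifications that $S_0=\{e_i\}\cup\{e_{2i-1}+e_{2i}\}$ is $4$-zero-sum free and that a single weight-$4$ transversal can be safely adjoined are correct. The divergence is in the third layer. The paper does \emph{not} use weight-$4$ transversals; it takes the extras to be the weight-$7$ vectors $g_j=e_{3j-2}+e_{3j-1}+\cdots+e_{3j+4}$ for $j=1,\dots,\lfloor (r-4)/3\rfloor$. With this choice the whole verification collapses to one observation: any sum of one, two, or three of the $g_j$ has Hamming weight at least $6$ (in fact $7$, $\ge 6$, $\ge 7$ respectively), while any sum of at most three elements of $S_0$ has weight at most $6$, and equality would force three disjoint pair-sums, which never matches the support patterns arising from the $g_j$. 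No further bookkeeping is needed.

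Your approach, by contrast, has a genuine gap: the set $U$ is never actually constructed. You correctly identify that pairwise support-overlap $\le 1$ among the transversals would suffice (it does, for all of the two-, three-, and four-extra cases), but your inductive scheme does not deliver this. A weight-$4$ vector cannot be supported on three new indices, so each new extra must borrow at least one old index; you give no rule for choosing it, and the overlap-$\le 1$ condition must then be maintained against \emph{every} previously chosen extra, not just the last one. Already at $r=10$ you need two weight-$4$ transversals with overlap $\le 1$, and any two $4$-subsets of the five odd indices $\{1,3,5,7,9\}$ meet in three elements, so you are forced into a more delicate choice (e.g.\ using opposite parities in some pairs) that your sketch does not supply. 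Such families do exist for all $r$, but proving this is a separate combinatorial design argument. The paper's heavier vectors sidestep the entire issue: because their weight is $7$, the required inequalities follow from crude weight counts alone, with no compatibility conditions to engineer.
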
 

\begin{proof}
Consider the set 
\[\displaystyle{S=T \ \prod_{i=1}^{r}e_i\prod_{i=1}^{\left\lfloor \frac{r}{2}\right\rfloor}(e_{2i-1}+e_{2i})
},\mbox{ with }T=\prod_{i\in B}g_{i},\]
where $g_{i}=\sum_{k=i}^{i+6}e_{k}$ and $B=\left\{1+3j: 0\le j\le \frac{r-7}{3}\right\}.$ Note that $|S|= r+\left\lfloor\frac{r}{2} \right\rfloor + \left\lfloor \frac{r-4}{3} \right\rfloor.$ Clearly $0\not\in \sum_{ 4}(T)\cup \sum_{ 4}(ST^{-1})$. Moreover, any element of $\sum_{\le 3}(T)$ will require at least six elements of canonical basis and hence we are through.




\end{proof}

To prove the next theorem, we need two auxiliary results. The first is a particular case of Theorem 4.1(iii) in \cite{AGS}.

\begin{lemma}\label{adhi}
Let $C_2^r$ be an elementary abelian $2$-group and let $S\subset C_2^r$. If $|S|>r+1,$ then $S$ contains a nontrivial $k$-zero-sum subset such that $k$ is an even number and $2\le k\le r+2$.
\end{lemma}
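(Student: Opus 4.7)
The plan is to use an augmentation trick that encodes the parity of $|T|$ as an extra $\mathbb{F}_2$-coordinate, after which a single dimension count produces the desired subset.

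View $C_2^r$ as the vector space $\mathbb{F}_2^r$. For each $s\in S$, I would form the augmented vector $\tilde s := (s,1)\in \mathbb{F}_2^{r+1}$; note that $\tilde s\neq 0$ for every $s$, even if $s=0$. The key observation is that, for any nonempty $T\subseteq S$,
\[
\sum_{s\in T}\tilde s = 0 \ \Longleftrightarrow \ \sum_{s\in T} s = 0 \text{ in } C_2^r \ \text{ and } \ |T| \text{ is even},
\]
since the first $r$ coordinates record the zero-sum condition and the last coordinate records $|T| \bmod 2$. Because $|S|>r+1=\dim\mathbb{F}_2^{r+1}$, the family $\{\tilde s : s\in S\}$ must be $\mathbb{F}_2$-linearly dependent.

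I would then take $T\subseteq S$ to be the support of a minimal nontrivial linear dependence among the $\tilde s$ (equivalently, a circuit of the associated linear matroid). By minimality, $\{\tilde s : s\in T\setminus\{t\}\}$ is linearly independent for every $t\in T$, hence $|T|-1\leq r+1$, so $|T|\leq r+2$. Also $|T|\geq 2$ because no single $\tilde s$ vanishes. By the equivalence above, $T$ is precisely an even-size zero-sum subset with $2\leq |T|\leq r+2$, as required.

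I do not anticipate any serious obstacle: the whole argument reduces to the augmentation identity combined with the standard matroid fact that a circuit in a rank-$n$ vector matroid has at most $n+1$ elements. The only point worth flagging is the edge case $0\in S$, which is automatically handled because the augmented vector $(0,1)$ is still nonzero.
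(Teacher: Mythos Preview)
Your argument is correct. The augmentation $s\mapsto (s,1)\in\mathbb{F}_2^{r+1}$ neatly packages the parity condition, and since the $s\in S$ are distinct, the vectors $(s,1)$ are distinct and nonzero; with $|S|>r+1$ a dependence exists, and choosing a minimal one (a circuit) gives $2\le |T|\le r+2$ with $|T|$ even and $\sigma(T)=0$.

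By contrast, the paper does not prove this lemma at all: it simply records it as a particular case of Theorem~4.1(iii) in \cite{AGS} (Adhikari--Grynkiewicz--Sun). Your route is therefore genuinely different in that it is entirely self-contained and elementary, avoiding any appeal to the weighted zero-sum machinery of \cite{AGS}. What the citation buys is brevity and a connection to a more general framework; what your argument buys is transparency---the whole lemma reduces to a one-line dimension count once the extra parity coordinate is introduced, and the circuit bound $|T|-1\le r+1$ is immediate.
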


\begin{lemma}\label{maiort3}
Let $T_1,\dots,T_\ell\subset S \subset C_2^6$ be $6$-zero-sum subsets, $\ell\geq3$, $\sigma(S)\neq 0$ and $|S|=10.$ If  $|(T_i,T_j)|\neq3, i\neq j,$ for some $i,j\in\{1,\cdots,\ell\},$ then there is $T'|S$ with $|T'|=4$ such that $\sigma(T')=0.$  
\end{lemma}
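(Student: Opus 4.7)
My plan is to exploit the fact that in an elementary abelian $2$-group the symmetric difference of two zero-sum subsets is itself zero-sum, with cardinality $|T|+|T'|-2|T\cap T'|$. I first record two preliminary facts about the given $6$-element zero-sum subsets of $S$: since $|S|=10$ and $|T_i|=|T_j|=6$, inclusion–exclusion gives $|T_i\cap T_j|\geq 2$; and two \emph{distinct} $6$-element zero-sum subsets cannot share exactly $5$ elements, since the two non-shared singletons, satisfying the same linear equation, would be forced to coincide. Together with the hypothesis $|T_i\cap T_j|\neq 3$ this leaves $|T_i\cap T_j|\in\{2,4\}$.

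If $|T_i\cap T_j|=4$, we are immediately done: $T'=T_i\triangle T_j$ has four elements and sums to zero. The substantive case is $|T_i\cap T_j|=2$, which forces $T_i\cup T_j=S$. Here I would bring in a third subset $T_k$, guaranteed by $\ell\geq 3$. If $|T_k\cap T_i|=4$ or $|T_k\cap T_j|=4$, the previous argument applies with $T_k$ in place of $T_j$ (resp.\ $T_i$), so we may assume $|T_k\cap T_i|,|T_k\cap T_j|\in\{2,3\}$. Since $T_k\subset T_i\cup T_j$, inclusion–exclusion reads
\[|T_k\cap T_i|+|T_k\cap T_j|=|T_k|+|T_k\cap T_i\cap T_j|=6+|T_k\cap T_i\cap T_j|\geq 6,\]
which pins down $|T_k\cap T_i|=|T_k\cap T_j|=3$ and $T_k\cap T_i\cap T_j=\emptyset$.

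To close this last case I would derive a contradiction by examining the $6$-zero-sum subset $T_i\triangle T_k$. A short set-theoretic computation, using $T_i\cap T_j\cap T_k=\emptyset$, shows that
\[(T_i\triangle T_k)\cap T_j=(T_i\cap T_j)\cup(T_j\cap T_k),\]
a disjoint union of sizes $2$ and $3$. By the "no $5$-intersection" fact this would force $T_i\triangle T_k=T_j$; but intersecting with $T_i$ gives $|(T_i\triangle T_k)\cap T_i|=|T_i\setminus T_k|=3\neq 2=|T_i\cap T_j|$, the desired contradiction. The main obstacle I expect is keeping the bookkeeping in the $|T_i\cap T_j|=2$ case transparent and verifying that the "$5$-intersection is impossible" observation really closes the argument; once those are in place, nothing heavier than symmetric differences is used.
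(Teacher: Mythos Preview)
Your proof is correct and follows essentially the same approach as the paper: both restrict the possible intersection sizes to $\{2,3,4\}$, dispose of $4$ via the symmetric difference, and in the case $|T_i\cap T_j|=2$ use inclusion--exclusion with a third $T_k$ to force $|T_k\cap T_i|=|T_k\cap T_j|=3$ with $T_i\cap T_j\cap T_k=\emptyset$, then finish with a symmetric-difference contradiction. The only difference is cosmetic: the paper takes $T_i\triangle T_j$ (an $8$-element zero-sum set containing $T_k$) and obtains an impossible $2$-element zero-sum set $(T_i\triangle T_j)\setminus T_k$, whereas you take $T_i\triangle T_k$ and obtain a forbidden $5$-element intersection with $T_j$---two sides of the same coin.
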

\begin{proof}
Without loss of generality suppose $|(T_1,T_2)|\neq3.$  As $|S|=10,$ it follows that $|(T_i,T_j)|\neq1.$ As $S$ is a subset of $C_2^6,$ it follows that $|(T_i,T_j)|\neq5.$ Hence, $|(T_i,T_j)|\in \{2,3,4\}.$ If $|(T_i,T_j)|=4,$ then we get a $4$-zero-sum subset $T'$ of $S.$ Now, suppose $|(T_1,T_2)|=2.$ If $|(T_3,T_i)|=2$ for all $i\in\{1,2\}$ then $|T_3|\leq4$, a contradiction. If $|(T_3,T_1)|=2$  and $|(T_3,T_2)|=3,$ then $|T_3|\leq5$, a contradiction. As $|(T_1,T_2)|=2,$ it follows that there is $S_1=T_1(T_1,T_2)^{-1}T_2(T_1,T_2)^{-1}$ such that $S_1$ is a $8$-zero-sum subset. Suppose $|(T_3,T_1)|=3$ and $|(T_3,T_2)|=3.$ Then,
\begin{enumerate}
\item if $|(T_1,T_2,T_3)|\neq0,$ then $|T_3|\leq5,$ a contradiction;
\item if $|(T_1,T_2,T_3)|=0,$ then $T_3|S_1,$ $|S_1T_3^{-1}|=2$ and  $\sigma(S_1T_3^{-1})=0,$ a contradiction. 
\end{enumerate}
\end{proof}

\begin{theorem}
We have $g^{4}(C_2^2)=4, g^{4}(C_2^3)=5, g^4(C_2^4)=7, g^{4}(C_2^5)=8$ and $g^{4}(C_2^6)=10$.
\end{theorem}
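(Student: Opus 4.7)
The plan is to handle each value of $r$ separately. Lower bounds for $r\geq 4$ follow immediately from Proposition \ref{t1}; for $r=2$ the bound $g^{4}\geq 4$ is trivial (a $4$-subset requires $|S|\geq 4$), and for $r=3$ the set $\{e_1,e_2,e_3,e_1+e_2\}$ is $4$-zero-sum free, giving $g^{4}(C_2^3)\geq 5$. The upper bounds for $r=2,3$ are then immediate: $|C_2^2|=4$ with $\sigma(C_2^2)=0$, and any $5$-subset of $C_2^3$ has an even $k$-zero-sum with $k\in[2,5]$ by Lemma \ref{adhi}, forcing $k=4$ since $k=2$ is impossible in a set.

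For $r=4$ and $r=5$, the strategy is to extract a $6$-zero-sum $T_1$ from Lemma \ref{adhi} (under the assumption that no $4$-zero-sum exists), then remove a single element and reapply. For $r=4$ with $|S|=7$ and $S=T_1\cup\{s\}$, Lemma \ref{adhi} applied to $S\setminus\{t\}$ gives either a $4$-zero-sum in $S$ (contradiction) or makes $S\setminus\{t\}$ itself sum to zero, which forces $s=t$ and so is also impossible. For $r=5$ with $|S|=8$, the same strip-off yields a second $6$-zero-sum $T_t\neq T_1$; but any two distinct $6$-zero-sums in an $8$-set satisfy $|T_i\cap T_j|\geq 4$, and intersection sizes $4,5,6$ produce respectively a $4$-zero-sum, a $2$-zero-sum, or $T_i=T_j$, all contradicting our hypotheses.

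The main difficulty is $r=6$. Assuming $|S|=10$ has no $4$-zero-sum, the first step is to show $\sigma(S)\neq 0$: otherwise the $6$- or $8$-zero-sum produced by Lemma \ref{adhi} has a complement that is a $4$-zero-sum or a $2$-subset summing to zero. Second, a $6$-zero-sum $T_1$ must exist, because the alternative (only $8$-zero-sums) forces $t+\sigma(S)\in S$ for every $t\in S$, pairing $S$ into five pairs summing to $\sigma(S)$, any two of which constitute a $4$-zero-sum. Third, applying Lemma \ref{adhi} to each $8$-subset $S\setminus\{t,t'\}$ with $t+t'\neq\sigma(S)$ produces a $6$-zero-sum avoiding $\{t,t'\}$; a short bookkeeping argument then shows that one or two $6$-zero-sums would force a would-be second or third $6$-zero-sum to have all its elements equal to the single value $\sigma(S)+t$, contradicting its size. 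Hence at least three distinct $6$-zero-sums $T_1,T_2,T_3$ exist.

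Lemma \ref{maiort3} then reduces to the case $|T_i\cap T_j|=3$ for all pairs, and inclusion–exclusion forces $|A|:=|T_1\cap T_2\cap T_3|\in\{0,1\}$. If $|A|=1$ with $A=\{a\}$ and $T_i\setminus\bigcup_{j\neq i}T_j=\{c_i\}$, summing $\sigma(T_1)+\sigma(T_2)+\sigma(T_3)=0$ collapses in $\mathbb{F}_2$ to $a+c_1+c_2+c_3=0$, the desired $4$-zero-sum. The delicate case is $|A|=0$, where $S=B_{12}\cup B_{13}\cup B_{23}\cup\{s\}$ with $|B_{ij}|=3$ and $\sigma(B_{ij})=\beta$. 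Enumerating $6$-subsets by their type $(\alpha_{12},\alpha_{13},\alpha_{23},\sigma_s)$ shows that the only possible extra $6$-zero-sums have the form $B_{ij}\cup\{x,y,s\}$ with $x+y=\beta+s$; any such $T_4$ would satisfy $\sum_i|T_4\cap T_i|=2(\alpha_{12}+\alpha_{13}+\alpha_{23})$, an even number, whereas all three intersections equal to $3$ would give the odd sum $9$, so Lemma \ref{maiort3} rules out such a $T_4$ and forces $x+y\neq\beta+s$ for all cross-block pairs. Finally, Lemma \ref{adhi} applied to the $8$-set $S\setminus\{a_1,b_1\}$ for $a_1\in B_{12}$, $b_1\in B_{13}$ eliminates $k=4$ by hypothesis, $k=8$ via the constraint just derived, and $k=6$ because every candidate $6$-zero-sum ($T_1,T_2,T_3$ or the three $(3,1,1,1)$-shaped families) either contains $a_1$ or $b_1$ or requires a forbidden cross-block sum. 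This triple impossibility contradicts Lemma \ref{adhi} and closes the argument; carrying out this final enumeration is the hardest step.
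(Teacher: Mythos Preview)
Your proof is correct.  For $r\le 5$ it matches the paper's line: strip an element, reapply Lemma~\ref{adhi}, and compare the two resulting $6$-zero-sums by intersection size.

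For $r=6$ your route is genuinely different from the paper's and structurally cleaner.  The paper keeps $|T_3|\in\{6,8\}$ open and splits into two cases, each with further sub-cases driven by the quantities $|(T_i,T_5)|$ for $T_5=(T_1,T_2)$ and, in Case~2, by the construction of a sixth zero-sum $T_6$.  You instead (i) force all three $T_i$ to have size $6$ via the involution $t\mapsto t+\sigma(S)$, (ii) pivot on the triple intersection $A=T_1\cap T_2\cap T_3$, whose size inclusion--exclusion pins to $\{0,1\}$, and (iii) dispose of $|A|=1$ by the single $\mathbb F_2$-identity $a+c_1+c_2+c_3=0$, which has no analogue in the paper.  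Your parity observation $\sum_i|T_4\cap T_i|=2(\alpha_{12}+\alpha_{13}+\alpha_{23})$ in the $|A|=0$ case is the key new idea: it shows at one stroke that \emph{every} extra $6$-zero-sum must violate the all-$3$ intersection pattern demanded by Lemma~\ref{maiort3}, so none can exist, and the resulting cross-block constraint $x+y\neq\beta+s$ then kills $k=8$ in the final application of Lemma~\ref{adhi}.  The paper never isolates this parity obstruction and proceeds by ad~hoc casework instead.  What the paper's approach buys is that each step is an elementary size count requiring no block decomposition; what your approach buys is a shorter argument with no sub-sub-cases, at the cost of spotting the structure $S=B_{12}\cup B_{13}\cup B_{23}\cup\{s\}$ and the parity identity.

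Two cosmetic points worth tightening in a full write-up: your ``short bookkeeping argument'' for obtaining three distinct $6$-zero-sums is just ``pick $t\in T_1$ (resp.\ $t\in T_1\cap T_2$, nonempty since $|T_1\cap T_2|\ge 2$) and any $t'\neq t+\sigma(S)$''; and in the final $k=6$ exclusion you need only list $T_1,T_2,T_3$ as candidates, since the parity step has already eliminated the $(3,1,1,1)$ families.
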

\begin{proof}
It is trivial to see $g^{4}(C_2^2)=4$. We start proving $g^{4}(C_2^3)=5$. By Proposition \ref{t1}, $g^{4}(C_2^3)\ge 5$.
By Lemma \ref{adhi} one can see a zero-sum subset has to have size 2 or 4. Clearly 2 is impossible, as a consequence we are done. By Proposition \ref{t1} $g^4(C_2^4)\geq 7$. By Lemma \ref{adhi} one can see an $\ell$-zero-sum subset $T,$ where $\ell$ is equal to 2,4 or 6. Clearly $|T|\neq 2$. If $|T|=4$ then done. Suppose $|T|=6.$ Set $a|T$. By Lemma \ref{adhi} $Sa^{-1}$ has an $\ell$-zero-sum subset $T',$ where $\ell$ is equal to 2,4 or 6. Only case which has to be resolved is $|T'|=6$. But, in that case $|(T,T')|=5$ and hence $S(T,T')^{-1}$ is a $2$-zero-sum subset, which is absurd. Hence we are done.

By Proposition \ref{t1}, $g^{4}(C_2^5)\ge 8$. As above one get two $6$-zero-sum subset, say $T$ and $T',$ and $|(T,T')|\in\{4,5\}$. In any other case, we are done.

By Proposition \ref{t1}, $g^{4}(C_2^6)\ge 10$. Let $S\subset C_2^6$ be of size 10. 
By Lemma \ref{adhi} one can see an $\ell$-zero-sum subset $T$ with 
$\ell\in\{2,4,6,8\}$. We may assume $\ell\in\{6,8\}$.
If $\sigma(S)=0$ then we are clearly done.

Assume $\sigma(S)\neq 0$. 
By Lemma \ref{adhi} one can obtain three  $\ell$-zero-sum subset, with $\ell\in\{6,8\},$ say, $T_i, i\in\{1,2,3\}.$ Also one can assume that at least two of them will have size 6, say $T_1$ and $T_2,$ since if we have two set of size 8 we can obtain a $4$-zero-sum subset. Using similar arguments we get $|(T_1,T_2)|\in\{2,3\}.$ 

We divided the analysis into two cases.

\textbf{Case 1.}
Suppose $|T_3|=8$.
Let $a|(T_1,T_2)$ and $b|T_3,$ with $a\neq b.$ By Lemma \ref{adhi} one can obtain a $6$-zero-sum subset of $S(ab)^{-1}$, say $T_4$.


By Lemma \ref{maiort3}, if $|(T_i, T_j)|\neq 3,$ for some $i\neq j$ and $i,j\in\{1,2,4\},$ we are done. Further, we can assume $|(T_i, T_j)|=3$ for $i,j\neq 3$ and $i\neq j.$ 
Hence $|(T_4,T_i)|=3$ for all $i\in\{1,2\}$.

If $|(T_i, T_3)|\neq 4,$ for all $i\in\{1,2,4\},$ we are done. Thus, we may assume $|(T_i, T_3)|= 4$ for $i\neq 3$. Set $T_5=(T_1, T_2)$.
Note that $0\le |(T_5,T_4)|\le 3$. If $|(T_5,T_4)|=2$ then $|T_4|\le 5,$ a contradiction. If $|(T_5,T_4)|=3,$ then $|T_4|\le 4$ , a contradiction, since $|(T_4,T_1)|=|(T_4,T_2)|=3.$ Hence $|(T_5,T_4)|\in\{0,1\}$. 

\textbf{Subcase 1.}
Suppose $|(T_5,T_4)|=0$. 
Set $S=T\cdot g$, where $T=T_1 T_2 T_5^{-1}$.
Since $|(T_i,T_4)|=3$ for $i=1\mbox{ or }2$ we get $T_4|T$. Let $x=|(T_3,T)|$ and $y=|(T_3,T_5)|$. Note that $x=7\mbox{ or }8$ and $y=1,$ since $|(T_i, T_3)|= 4$ for $i\neq 3.$ As $|(T_5,T_4)|=0,$ it follows that $|(T_3,T_4)|= 6$, a contradiction.

\textbf{Subcase 2.}
Suppose $|(T_5,T_4)|=1$. Set $S=T\cdot g$, where $T=T_1 T_2 T_5^{-1}$.

Clearly $g|T_4$. 
Arguing as above, we get $|(T_4,T_3)|\ge 5$, a contradiction.


\textbf{Case 2.}
\textbf{$|T_3|=6.$} 
As in Case 1 we get one more $6$-zero-sum subset, say $T_4.$ By Lemma \ref{maiort3},  $|(T_i,T_j)|=3$ for all $i,j\in\{1,2,3,4\},$ with $i\neq j.$ Set $S=T\cdot g$, where $T=T_1T_2 T_5^{-1}$ with $T_5=(T_1,T_2).$ Note that $0\le |(T_i,T_5)|\le 3, $ for $i\in\{3,4\}$. If $|(T_i,T_5)|=2,3,$ for $i\in\{3,4\},$ then $|T_i|\le 5$, a contradiction. Therefore, $|(T_i,T_5)|\le 1$ for $i\in\{3,4\}$. 

\textbf{Subcase 1.} $|(T_4,T_5)|=0$.
Clearly, $T_4=(T_1T_5^{-1})(T_2T_5^{-1})$.
If $|(T_3,T_5)|=0,$ then $|(T_4,T_3)|=5 \mbox{ or }6$, a contradiction.

\textbf{Subcase 2.} $|(T_4,T_5)|=1$.
If $|(T_3,T_5)|=0$ then $|(T_4,T_3)|\ge4,$ a contradiction.
If $|(T_3,T_5)|=1,$ then as $|(T_i,T_j)|=3$ for all $i\neq j$ we get a $6$-zero-sum subset, namely $T_6 = (T_3(T_3,T_4)^{-1})(T_4(T_3,T_4)^{-1}).$ Note that either $|(T_6,T_1)|=4$ or $|(T_6,T_2)|=4$ or $|(T_6,T_4)|=4,$ a contradiction.


\end{proof}

\subsection{The $k$-Harborth Constant for $C_p$} \hspace{0.6cm}

To obtain a upper bound for $g^k(C_p)$ we used the generalized Erd\H{o}s-Heilbronn conjecture (1964) proved by Dias da Silva and Hamidoune (see theorem below) in 1994.

\begin{theorem}[\cite{sh}]\label{eh}
 If $p$ is a prime and $X$ is a subset of $C_p,$ then $|\hat{k}X|\geq\min\{k|X|-k^2+1,p\}$ for $\hat{k}X=\{x_1+\cdots+x_k|x_i\in X,x_i\neq x_j\}.$
\end{theorem}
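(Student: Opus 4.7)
The plan is to prove this via Alon's Combinatorial Nullstellensatz, following the polynomial-method strategy of Alon, Nathanson and Ruzsa (an alternative to the original exterior-algebra proof of Dias da Silva and Hamidoune). First dispose of the trivial case $|X|<k$, where both sides of the inequality are non-positive. In the main case, one may further reduce to the regime $k|X|-k^2+1\le p$ by passing, if necessary, to a suitable subset of $X$: shrinking $X$ cannot increase $|\hat{k}X|$, and the shrunken problem falls into the first regime. Argue by contradiction: assume $\hat{k}X$ is contained in some $B\subseteq C_p$ with $|B|=k|X|-k^2<p$.

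Introduce the polynomial over $\mathbb{F}_p$
\[f(x_1,\ldots,x_k)\;=\;\prod_{b\in B}\bigl(x_1+\cdots+x_k-b\bigr)\,\cdot\,\prod_{1\le i<j\le k}(x_j-x_i).\]
For every $(x_1,\ldots,x_k)\in X^k$, either two coordinates coincide and the Vandermonde factor vanishes, or all coordinates are distinct and their sum lies in $\hat{k}X\subseteq B$, which makes the first factor vanish. Hence $f$ vanishes identically on $X^k$. Its total degree equals $|B|+\binom{k}{2}=\sum_{i=1}^k(|X|-i)$, and each exponent in the distinguished monomial $x_1^{|X|-1}x_2^{|X|-2}\cdots x_k^{|X|-k}$ is strictly less than $|X|$. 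This is exactly the setup in which the Combinatorial Nullstellensatz forbids this coefficient from being nonzero modulo $p$.

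The heart of the argument is to compute the coefficient of $x_1^{|X|-1}\cdots x_k^{|X|-k}$ in $f$ and show that it is nonzero in $\mathbb{F}_p$. Expanding the Vandermonde as $\sum_{\sigma\in S_k}\mathrm{sgn}(\sigma)\prod_i x_i^{\sigma(i)-1}$ and $(x_1+\cdots+x_k)^{|B|}$ by the multinomial theorem, a short bookkeeping shows that only the exponent profile $a_i=|X|+1-i-\sigma(i)$ contributes, and the coefficient collapses to
\[|B|!\,\det\!\Bigl(\tfrac{1}{(|X|+1-i-j)!}\Bigr)_{1\le i,j\le k},\]
with the convention $1/n!=0$ for $n<0$. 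Scaling row $i$ by $(|X|-i)!$ turns the $(i,j)$-entry into the monic polynomial $(|X|-i)(|X|-i-1)\cdots(|X|-i-j+2)$ of degree $j-1$ in the variable $|X|-i$. Such a matrix can be column-reduced to a Vandermonde without changing the determinant, giving $\prod_{1\le i<j\le k}\bigl((|X|-j)-(|X|-i)\bigr)=(-1)^{\binom{k}{2}}\prod_{m=1}^{k-1}m!$, so the coefficient in $f$ equals $(-1)^{\binom{k}{2}}\,|B|!\,\prod_{m=1}^{k-1}m!\,\big/\prod_{i=1}^k(|X|-i)!$.

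The main obstacle is confirming that this closed form is a unit in $\mathbb{F}_p$. This is precisely where the reduction to $|B|<p$ is used: since $|X|\le p$ (else the claim is immediate from $X=C_p$) and $|B|<p$, every factorial appearing in the numerator or denominator involves integers strictly less than $p$, so none is divisible by $p$, and the coefficient is genuinely nonzero modulo $p$. This contradicts the Combinatorial Nullstellensatz and completes the proof.
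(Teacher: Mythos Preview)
The paper does not prove this theorem at all; it is quoted from \cite{sh} and used as a black box, so there is no ``paper's proof'' to compare against. Your argument via the Combinatorial Nullstellensatz is the standard Alon--Nathanson--Ruzsa alternative to the original exterior-algebra proof, and the core computation --- the determinant evaluation showing that the coefficient of $x_1^{|X|-1}\cdots x_k^{|X|-k}$ equals $(-1)^{\binom{k}{2}}|B|!\prod_{m=1}^{k-1}m!\big/\prod_{i=1}^{k}(|X|-i)!$ --- is correct.

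There is, however, a real gap in your reduction to the regime $k|X|-k^2+1\le p$. Shrinking $X$ to a subset $X'$ in that regime only yields $|\hat kX|\ge|\hat kX'|\ge k|X'|-k^2+1$, and for this to reach $p$ you would need $k|X'|-k^2+1=p$, i.e.\ $|X'|=(p+k^2-1)/k$; this is an integer only when $k\mid p-1$. For all other residues of $p$ modulo $k$ the reduction leaves a deficit of up to $k-1$, and the argument as written does not close it.

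The clean fix is to drop the reduction and run the polynomial argument uniformly. Assuming $|\hat kX|<\min\{k|X|-k^2+1,p\}$, take $B\supseteq\hat kX$ with $|B|=\min\{k|X|-k^2,\,p-1\}$; in either case $|B|<p$. Apply the Nullstellensatz to the same $f$, but with a monomial $\prod_i x_i^{d_i}$ where the $d_i$ are \emph{distinct} integers in $[0,|X|-1]$ summing to $|B|+\binom{k}{2}$. Such a choice exists because $\binom{k}{2}\le |B|+\binom{k}{2}\le k|X|-\binom{k+1}{2}$, the upper bound coming from $|B|\le k|X|-k^2$. Your row-scaling/Vandermonde computation goes through verbatim with $|X|-i$ replaced by $d_i$, giving the coefficient $\pm\,|B|!\prod_{i<j}(d_j-d_i)\big/\prod_i d_i!$, which is a unit in $\mathbb F_p$ since every factorial involved has argument $<p$. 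This delivers the contradiction in both regimes at once.
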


%
%

\begin{theorem}\label{ti}
Let $p$ be a prime number such that 
\[p>\left\{
\begin{array}{lll}
m^2&\mbox{ if } & k=2m, m\geq2\\
\frac{2m^3+5m^2-8m-3}{2(m+1)}&\mbox{ if } & k=2m+1, m\geq1
\end{array}
\right.
\]
and $p\equiv r\pmod k.$ 
\begin{enumerate}
\item[(i)]If $k$ is even with $r\in\{1,m+1,\dots,k-1\},$ then
\[
g^k(C_p)=\begin{cases}
\dfrac{p-1}{k}+k, \mbox{ if } r=1\\
\dfrac{p-r}{k}+k+1, \mbox{ if } r\neq1,
\end{cases}
\] 
\item[(ii)] If $k$ is odd with $r\in\{1,\dots,k-1\},$ then
\[
\begin{cases}
g^k(C_p)=\dfrac{p-1}{k}+k, \mbox{ if } r=1\\
\dfrac{p-r}{k}+k\leq g^k(C_p)\leq\dfrac{p-r}{k}+k+1, \mbox{ if } r\neq1.
\end{cases}
\]
\end{enumerate} 
\end{theorem}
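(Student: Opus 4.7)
The proof splits naturally into an upper bound from Theorem \ref{eh} and a lower bound via an explicit construction. For the upper bound, apply Theorem \ref{eh} directly: any $S\subseteq C_p$ with $k|S|-k^2+1\geq p$ satisfies $\hat{k}S=C_p$ and hence contains a $k$-zero-sum subset. This inequality rearranges to $|S|\geq (p+k^2-1)/k$, and writing $p=qk+r$ with $1\leq r<k$, the smallest integer $|S|$ satisfying it is $q+k$ when $r=1$ and $q+k+1$ when $r\neq 1$, yielding the claimed upper bounds in both (i) and (ii).

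For the lower bound, I would consider arithmetic progressions $S_a=\{a,a+1,\ldots,a+n-1\}\subseteq C_p$. The $k$-subsums of $S_a$ are exactly the integers in the interval $[ka+\binom{k}{2},\,ka+kn-\binom{k+1}{2}]$ reduced modulo $p$, and when $kn-k^2+1\leq p-1$ these form a single block of $kn-k^2+1$ consecutive residues (the equality case of Dias da Silva--Hamidoune). Translating by a suitable choice of $a$ then places $0$ in the complementary interval, making $S_a$ a $k$-zero-sum-free set. Choosing $n=q+k-1$ gives $kn-k^2+1=p-r-k+1<p$, and hence $g^k(C_p)\geq q+k$; this matches the upper bound when $r=1$ and yields the weaker lower bound in (ii) for $r\neq 1$. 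For the sharper bound in (i) with $r\neq 1$ and $k=2m$, I would instead take $n=q+k$, obtaining interval length $p-r+1\leq p-1$ (this is where $r\geq 2$ is crucial), and the same shifting argument produces a $k$-zero-sum-free set of size $q+k$, giving $g^k(C_p)\geq q+k+1$ and matching the upper bound.

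The main obstacle is the bookkeeping in the lower-bound step: one has to verify that $S_a$ has $n$ genuinely distinct elements of $C_p$ and that its $k$-subsum interval does not wrap around modulo $p$, so that the Dias da Silva--Hamidoune bound is attained with equality. This is exactly where the size hypotheses $p>m^2$ for $k=2m$ and the corresponding inequality for $k=2m+1$ are used. The restriction $r\in\{1,m+1,\ldots,k-1\}$ in (i) is consistent with $p=qk+r$ being an odd prime, since $k$ even forces $r$ to be odd; the remaining gap between the lower bound $q+k$ and the upper bound $q+k+1$ in (ii) when $r\neq 1$ reflects that the sharpening from $n=q+k-1$ to $n=q+k$ is not carried out in the odd-$k$ setting.
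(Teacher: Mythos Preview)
Your upper bound via Theorem~\ref{eh} matches the paper's exactly. Your lower bound, however, is genuinely different and in fact cleaner than the paper's. The paper constructs a two-piece set of the form $\{0,1,\dots,m-1\}\cup\{p-\lceil(p-1)/k\rceil-m+1,\dots,p-1\}$ (with minor variants), then splits any hypothetical $k$-zero-sum subset as $\ell$ elements from one piece and $\nu=k-\ell$ from the other, and carries out a lengthy case analysis bounding $y_1-x$ for each $(\ell,\nu)$. The inequalities $p>m^2$ and $p>\frac{2m^3+5m^2-8m-3}{2(m+1)}$ arise precisely from making those bounds fall below $p$. Your single arithmetic progression $S_a$ avoids all of this: once you know the $k$-subsums of $\{0,\dots,n-1\}$ fill the integer interval $[\binom{k}{2},kn-\binom{k+1}{2}]$ (an elementary fact), the shift by $a$ works because $\gcd(k,p)=1$ lets $ka$ range over all of $C_p$.

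Two remarks. First, your claim that the size hypotheses are ``exactly where'' your bookkeeping needs them is not accurate for your own argument: your construction only requires $n\le p$ and $n\ge k$, both of which follow from $p>k$. The stronger hypotheses are artifacts of the paper's construction, not yours. Second, your sharpening from $n=q+k-1$ to $n=q+k$ when $r\ge 2$ uses only $kn-k^2+1=p-r+1\le p-1$, which is parity-free; so your argument in fact closes the gap in case~(ii) as well and gives $g^k(C_p)=\frac{p-r}{k}+k+1$ for odd $k$ and $r\neq 1$, improving on the stated theorem. You should state this explicitly rather than describing it as ``not carried out.''
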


\begin{proof}
Using Theorem \ref{eh} one can easily see that any set of size $\left\lceil\dfrac{p+k^2-1}{k}\right\rceil$ has a $k$-zero-sum subset. Therefore, 
 \[
g^k(C_p)\leq\begin{cases}
\dfrac{p-1}{k}+k, \mbox{ if } r=1\\
\dfrac{p-r}{k}+k+1, \mbox{ if } r\neq1.
\end{cases}
\]  
We divide the proof in two cases.\\
{\bf Case 1:} We consider $4\leq k<\dfrac{p-1}{m}$ and the set
\[
S=\begin{cases}
\{0,1,\dots,m-1,p-\left\lceil\frac{p-1}{k}\right\rceil-m+1,\dots,p-1\}, \mbox{ if } k=2m, m\geq2\\
\{1,\dots,m,p-\left\lceil\frac{p-1}{k}\right\rceil-m+1,\dots, p-1\}, \mbox{ if } r\neq1, k=2m+1, m\geq1\\
\{1,\dots,m,p-\left\lceil\frac{p-1}{k}\right\rceil-m,\dots, p-1\}, \mbox{ if } r=1, k=2m+1, m\geq1\\
\end{cases}
\]
If $r\neq1$ set $S'=S\setminus\{0,1,\dots,m-1\}$ if $k$ is even and $S'=S\setminus\{1,\dots,m\}$ if $k$ is odd. The size of $S'$ is $\left\lceil\frac{p-1}{k}\right\rceil+m-1.$ As $k<\dfrac{p-1}{m},$ it follows that $|S'|\geq k.$ 
For $k$ even, suppose $r\in\{m+1,\dots,k-1\}$ and note that any $k$-subsum of $S'$ cannot be multiple of $p,$ since any $k$-subsum of $p-S'=\{1,\dots, \left\lceil\frac{p-1}{k}\right\rceil+m-1\}$ belong to the set $\{\frac{k(k+1)}{2},\dots, p-r+m\}$ and if $r=1,$ any $k$-subsum of $p-S'$ belong to the set $\{\frac{k(k+1)}{2},\dots, p-m\}.$ Now, for $k$ odd, any $k$-subsum of $S'$ cannot be multiple of $p,$ since any $k$-subsum of $p-S'=\{1,\dots, \left\lceil\frac{p-1}{k}\right\rceil+m-1\}$ belong to the set $\{\frac{k(k+1)}{2},\dots, p-r\}.$ 
For $k$ odd and $r=1,$ size of $S'$ is $\left\lceil\frac{p-1}{k}\right\rceil+m.$ In this case, any $k$-subsum of $S'$ cannot be multiple of $p,$ since any $k$-subsum of $p-S'=\{1,\dots, \left\lceil\frac{p-1}{k}\right\rceil+m\}$ belong to the set $\{\frac{k(k+1)}{2},\dots, p-1\}.$ Therefore, if there is a $k$-zero-sum subset of $S,$ then we need to use elements of $S'$ and $SS'^{-1}.$ 

We write $S=S'T,$ where 
\[
S'=\begin{cases}
\{p-\left\lceil\frac{p-1}{k}\right\rceil-m+1,\dots,p-1\}, \mbox{ if } r\neq1\\
\{p-\left\lceil\frac{p-1}{k}\right\rceil-m, \dots,p-1\}, \mbox{ if } r=1
\end{cases}
\]
and 
\[
T=\begin{cases}
\{0,1,\dots,m-1\}, \mbox{ if } k=2m, m\geq2\\
\{1,\dots,m\}, \mbox{ if } k=2m+1, m\geq1.
\end{cases}
\]
Now, we consider $k=\ell+\nu,$ $1\leq\ell\leq m\leq\nu<k$ $(k \mbox{ even}), 1\leq\ell\leq m$ and $m+1\leq\nu<k$ $(k \mbox{ odd}).$ We observe that any $\ell$-subset of $T$ has sum 
\[x\in\left\{\frac{\ell(\ell-1)}{2},\dots,\frac{m(m-1)}{2}-\frac{(m-\ell-1)(m-\ell)}{2}\right\}\, (k \mbox{ even}) \mbox{ or }\]
 \[x\in\left\{\frac{\ell(\ell+1)}{2},\dots,\frac{m(m+1)}{2}-\frac{(m-\ell)(m-\ell+1)}{2}\right\}\, (k \mbox{ odd}).\]
We write $\nu=m+a,$ where $0< a\leq m$ if $k$ is odd and $0\leq a<m$ if $k$ is even. So, for $k$ even or $k$ odd with $r\neq1,$ any $\nu$-subset of $S'$ has sum 
\[y\in\left\{\nu p-\nu\left\lceil\frac{p-1}{k}\right\rceil-\frac{m(m-1)}{2}+\frac{a(a+1)}{2}, \dots, \nu p-\frac{\nu(\nu+1)}{2}\right\} \]
In case $k$ odd with $r=1,$ any $\nu$-subset of $S'$ has sum 
\[y\in\left\{\nu p-\nu\left\lceil\frac{p-1}{k}\right\rceil-\frac{m(m+1)}{2}+\frac{a(a+1)}{2}, \dots, \nu p-\frac{\nu(\nu+1)}{2}\right\}.\]
For $S$ to have a $k$-zero-sum subset, $x+y$ must be divisible by $p,$ i. e., we must have $x-y_1$ divisible by $p,$ where 

$y_1\in\left\{\frac{\nu(\nu+1)}{2}, \dots, \nu\left\lceil\frac{p-1}{k}\right\rceil+\frac{m(m-1)}{2}-\frac{a(a+1)}{2}\right\}\, (k \mbox{ even or } k \mbox{ odd with } r\neq1) $
\[\mbox{ and }y_1\in\left\{\frac{\nu(\nu+1)}{2}, \dots, \nu\left\lceil\frac{p-1}{k}\right\rceil+\frac{m(m+1)}{2}-\frac{a(a+1)}{2}\right\}(k \mbox{ odd with } r=1).\]
Notice that $\nu=k-\ell,$ and

$\begin{array}{ll}
y_1-x\in & \left\{\dfrac{\nu(\nu+1)}{2}-\dfrac{m(m-1)}{2}+\dfrac{(m-\ell-1)(m-\ell)}{2}, \dots, \nu\left\lceil\dfrac{p-1}{k}\right\rceil+\right. \\

 &\left. \ \  +\dfrac{m(m-1)}{2}- \dfrac{a(a+1)}{2}-\dfrac{\ell(\ell-1)}{2}\right\}(k \mbox{ even}) \mbox{ or }\\
\end{array}$

$\begin{array}{ll}
y_1-x\in & \left\{\dfrac{\nu(\nu+1)}{2}-\dfrac{m(m-1)}{2}+\dfrac{(m-\ell)(m-\ell+1)}{2}, \dots, \nu\left\lceil\dfrac{p-1}{k}\right\rceil+\right.\\

& \ \ \left. +\dfrac{m(m-1)}{2}-\dfrac{a(a+1)}{2}-\dfrac{\ell(\ell+1)}{2}\right\}\, (k \mbox{ odd with }r\neq1) \mbox{ or }

\end{array}$

$\begin{array}{ll}
y_1-x\in & \left\{\dfrac{\nu(\nu+1)}{2}-\dfrac{m(m+1)}{2}+\dfrac{(m-\ell)(m-\ell+1)}{2}, \dots, \nu\left\lceil\dfrac{p-1}{k}\right\rceil\right.\\
& \ \ \left. +\dfrac{m(m+1)}{2}-\dfrac{a(a+1)}{2}-\dfrac{\ell(\ell+1)}{2}\right\}(k \mbox{ odd with }r=1).
\end{array}$

We observe that 
\[
\nu\geq m \mbox{ and } 1\leq\ell\leq m \Rightarrow \frac{\nu(\nu+1)}{2}-\frac{m(m-1)}{2}+\frac{(m-\ell-1)(m-\ell)}{2}>0\,\] ($k$  even)  and

\[
\nu> m \mbox{ and } 1\leq\ell\leq m \Rightarrow \frac{\nu(\nu+1)}{2}-\frac{m(m-1)}{2}+\frac{(m-\ell)(m-\ell+1)}{2}>0\,\] ($k$  odd with $r\neq1$) and

\[
\nu>m \mbox{ and } 1\leq\ell\leq m+1 \Rightarrow \frac{\nu(\nu+1)}{2}-\frac{m(m+1)}{2}+\frac{(m-\ell)(m-\ell+1)}{2}>0\,\] ($k$  odd with $r=1$).

Let $k$ be even. Notice that $\ell=m-a,$ with $0\leq a<m,$ and the smallest value of 
\[
\frac{a(a+1)}{2}+\frac{\ell(\ell-1)}{2}=\frac{a(a+1)}{2}+\frac{(m-a)(m-a-1)}{2}
\]  
 it happens when $a=\frac{m-1}{2}.$ Therefore, the largest value of the expression 
\[\nu\left\lceil\frac{p-1}{k}\right\rceil+\frac{m(m-1)}{2}-\frac{a(a+1)}{2}-\frac{\ell(\ell-1)}{2}\]
it happens when $a=\frac{m-1}{2}.$ So, if $r=1,$
\[
\begin{array}{lll}
\nu\left\lceil\frac{p-1}{k}\right\rceil+\frac{m(m-1)}{2}-\frac{a(a+1)}{2}-\frac{\ell(\ell-1)}{2} & \leq & \frac{3m-1}{4m}(p-1)+\frac{m^2-m}{2}-\frac{m^2-1}{4}\\
&&\\
& = & \frac{(3m-1)p}{4m}+\frac{m^2-m}{2}-\frac{m^2-1}{4}-\frac{3m-1}{4m}.
\end{array}
\]
For the last value to be less than $p,$ we must have $p>\frac{m^3-2m^2-2m+1}{m+1}.$ But this is guarantee by hypothesis, since $p>m^2.$
Therefore, $p\nmid x-y_1.$\\
If $r\in\{m+1,\dots,k-1\},$ then
\[
\begin{array}{lll}
\nu\left\lceil\frac{p-1}{k}\right\rceil+\frac{m(m-1)}{2}-\frac{a(a+1)}{2}-\frac{\ell(\ell-1)}{2} & \leq & \frac{3m-1}{2} (\frac{p-r}{k})+\frac{3m-1}{2}+\frac{m^2-m}{2}-\frac{m^2-1}{4}\\
&&\\
& < & \frac{(3m-1)p}{4m}-\frac{3m-1}{4}+\frac{m^2+2m-1}{2}-\frac{m^2-1}{4},
\end{array}
\] 
For the last value to be less than $p$ we must have $p>m^2.$ But this is guarantee by hypothesis.
Therefore, $p\nmid x-y_1.$\\

Let $k$ be odd. Notice that $\ell=m-a,$ with $0\leq a< m,$ and the smallest value of 
\[
\frac{a(a+1)}{2}+\frac{\ell(\ell+1)}{2}=\frac{a(a+1)}{2}+\frac{(m-a)(m-a+1)}{2}
\]  
 it happens when $a=\frac{m+1}{2}.$ For $r=1$ the largest value of the expression 
\[\nu\left\lceil\frac{p-1}{k}\right\rceil+\frac{m(m+1)}{2}-\frac{a(a+1)}{2}-\frac{\ell(\ell+1)}{2}\]
it happens when $a=\frac{m+1}{2}.$ So
\[
\begin{array}{lll}
\nu\left\lceil\frac{p-1}{k}\right\rceil+\frac{m(m+1)}{2}-\frac{a(a+1)}{2}-\frac{\ell(\ell+1)}{2} & \leq &\frac{3m+1}{4m+2}(p-1)+\frac{m^2+m}{2}-\frac{\frac{2m^2+4m+2}{4}}{2}\\
&&\\
& = & \frac{(3m+1)p}{4m+2}+\frac{m^2-1}{4}-\frac{3m+1}{4m+2}.
\end{array}
\]
For the last value to be less than $p,$ we must have $p>\frac{2m^3+m^2-8m-3}{2(m+1)}.$ But this is guarantee by hypothesis, since $p>\frac{2m^3+5m^2-8m-3}{2(m+1)}.$
Therefore, $p\nmid x-y_1.$\\
If $r\in\{2,\dots,k-1\},$ then
\[
\begin{array}{lll}
\nu\left\lceil\frac{p-1}{k}\right\rceil+\frac{m(m-1)}{2}-\frac{a(a+1)}{2}-\frac{\ell(\ell+1)}{2} & \leq & \frac{(3m+1)(p-r)}{4m+2}+\frac{m^2+2m+1}{2}-\frac{\frac{2m^2+4m+2}{4}}{2}\\
&&\\
&\leq & \frac{(3m+1)}{4m+2}p-\frac{3m+1}{2m+1}+\frac{m^2+2m+1}{4}.
\end{array}
\]
For the last value to be less than $p,$ we must have $p>\frac{2m^3+5m^2-8m-3}{2(m+1)}.$ But this is guarantee by hypothesis.
Therefore, $p\nmid x-y_1.$\\
{\bf Case 2:} We consider $\dfrac{p-1}{m}<k<p$ and the same sets of Case 1. \\

In this case $|S'|<k.$ If there is a $k$-zero-sum subset of $S,$ then we need to use elements of $S'$ and $T.$ Here the proof is the same as in Case 1.


\end{proof}

\begin{remark}
The Theorem \ref{ti} provides $g^3(C_p)=\left\lceil\frac{p+8}{3} \right\rceil$ for $p\equiv1 \pmod3,$  $g^4(C_p)=\left\lceil\frac{p+15}{4} \right\rceil$ for all prime $p,$ $g^5(C_p)=\left\lceil\frac{p+24}{5} \right\rceil$ for $p\equiv1 \pmod5,$  
$g^6(C_p)=\left\lceil\frac{p+35}{6} \right\rceil$ for all prime $p>9,$ etc.
\end{remark}

\begin{remark}
We have $g^6(C_7)=7,$ since the sum of set $S=\{0,1,2,4,5,6\}$ is non zero. In fact, the proof of Theorem \ref{ti} provides $g^6(C_p)=\left\lceil\frac{p+35}{6} \right\rceil$ for all $p\equiv1 \pmod 6$ and $g^8(C_p)=\left\lceil\frac{p+63}{8} \right\rceil$ for all $p\equiv1 \pmod 8.$ The Theorem \ref{ti} provides $g^8(C_p)=\left\lceil\frac{p+63}{8} \right\rceil$ for all $p>16.$ But, for $p=11,$ the set $S=\{0,1,2,4,6,7,8,9,10\}$ or the set $S=\{0,1,2,5,6,7,8,9,10\}$ is an $8$-zero-sum free set. Therefore, $g^8(C_{11})=10.$ For $p=13,$ the set $S=\{0,1,2,3,8,9,10,11,12\}$ is an $8$-zero-sum free set. Therefore, $g^8(C_{13})=10.$ 
\end{remark}

\subsection{The $k$-Harborth Constant, with $k\in\{3,4\},$ for group $C_n$}\hspace{0.6cm}

We begin presented an upper bound for $g^3(C_n),$ where $n>2$ is an even number.

\begin{proposition}
We have $g^3(C_n)\le \frac{n}{2}+3,$ for any even number $n\ge4.$
\end{proposition}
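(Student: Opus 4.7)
The plan is to fix an arbitrary element $h\in S$ and reduce the problem to producing a pair of distinct elements of $S':=S\setminus\{h\}$ whose sum equals $-h$; any such pair $\{x,y\}$ immediately yields the desired $3$-element zero-sum subset $\{h,x,y\}\subseteq S$.

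For the reduction I would compare $S'$ with its translate--negation $-h-S':=\{-h-s:s\in S'\}$ inside $C_n$. Both sets have cardinality $|S|-1=\frac{n}{2}+2$, so the standard inclusion--exclusion bound in a group of order $n$ forces
\[
|S'\cap(-h-S')|\ \geq\ 2\Bigl(\frac{n}{2}+2\Bigr)-n\ =\ 4.
\]
Every $x$ in this intersection supplies a candidate pair: put $y=-h-x$, and then $x,y\in S'$ and $x+y=-h$, which already guarantees $x\neq h$ and $y\neq h$.

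The only real obstacle is that the candidate could be degenerate, i.e.\ $x=y$, which happens precisely when $2x=-h$. Since $n$ is even, this linear equation has at most $\gcd(2,n)=2$ solutions in $C_n$. Discarding these from an intersection of size $\geq 4$ leaves at least two genuine pairs $\{x,y\}$ with $x\neq y$, $x,y\in S'$, and $x+y=-h$, and any one of them combined with $h$ produces the required $3$-zero-sum subset. Once this degeneracy check is in place the proof is essentially complete, so the pigeonhole bound on $|S'\cap(-h-S')|$ is both the heart of the argument and its only non-trivial step.
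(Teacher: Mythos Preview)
Your proof is correct and follows essentially the same approach as the paper: fix an element, apply a translate-and-intersect pigeonhole count to produce candidate pairs summing to its negative, then discard the degenerate $x=y$ candidates using that $2x=-h$ has at most two solutions in $C_n$. The paper intersects $a+(S\setminus\{a\})$ with $-S$ (giving $\ge 5$ overlaps, one of which may have $y=a$) rather than $S'$ with $-h-S'$ (giving $\ge 4$), but the endgame is identical; note only that your $\ge 2$ surviving intersection elements guarantee at least \emph{one} non-degenerate pair, not two, though one is all you need.
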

\begin{proof}
Let $S\subset C_n$ with $|S|=\frac{n}{2}+3.$ Set $a\in S$. Consider a set
\[A = \{a+x:x\in S\setminus\{a\}\}.\]
Since $|C_n\setminus -S|=\frac{n}{2}-3$ and $|A|=\frac{n}{2}+2$, we get $|A\cap -S|\ge 5.$ Hence, $a+x_i=-y_i$, where $x_i\in S\setminus\{a\} \mbox{ and }y_i\in S,\mbox{ for }i\in\{1,2,3,4,5\}$. If 
$|\{a,x_i,y_i\}|=3$ for some $i\in\{1,2,3,4,5\}$ then we have a $3$-zero-sum subset of $S.$. Hence we are done. Assume on the contrary that, 
$|\{a,x_i,y_i\}|<3$ for all $i\in\{1,2,3,4,5\}.$ Without loss of generality we may assume that $y_i\neq a,\forall i\le 4$. Clearly, $x_i = y_i$ 
for all $i\in\{1,2,3,4\}.$ Hence $2(x_i-x_j)=0$ for all $i,j\in\{1,2,3,4\}, i\neq j.$ As a consequence, $x_i-x_j=\frac{n}{2}$ for all $i,j\in\{1,2,3,4\}, i\neq j.$ Hence $x_1=x_2$, a contradiction. Hence we conclude that we have a $3$-zero-sum subset of $S.$ Hence we are done.

\end{proof}

Now we present equalities for particular values of $n$.
\begin{proposition}
We have 
\begin{enumerate}
\item[(i)] $g^3(C_4)=4$ and $g^3(C_7)=g^3(C_6)=g^3(C_5)=5;$
\item[(ii)] $g^3(C_8)=6$ and $g^3(C_9)=7;$ 
\item[(iii)] $g^3(C_{12})=g^3(C_{11})=g^3(C_{10})=7.$ 
\end{enumerate}
\end{proposition}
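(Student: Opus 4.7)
The statement collects nine equalities $g^3(C_n)=\ell$ for $n\in\{4,5,6,7,8,9,10,11,12\}$, and the plan is to prove each by establishing matching lower and upper bounds.

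For the lower bound $g^3(C_n)\ge\ell$, I exhibit an explicit 3-zero-sum free subset of $C_n$ of size $\ell-1$ and check by direct enumeration that no three of its elements sum to $0$ modulo $n$. The smallest cases are handled by ad hoc choices: $\{1,2,3\}\subset C_4$, $\{1,2,3,4\}\subset C_5$, $\{1,2,4,5\}\subset C_6$, $\{1,2,3,5\}\subset C_7$ and $\{2,3,4,5,6\}\subset C_8$. For $C_9$ the complement $\{1,2,4,5,7,8\}$ of the order-$3$ subgroup $\langle 3\rangle$ is 3-zero-sum free, since any three elements summing to zero modulo $9$ would have to lie in a single nonzero coset of $\langle 3\rangle$, yet $1+4+7\equiv 3$ and $2+5+8\equiv 6$ modulo $9$. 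For $C_{12}$ the set of odd residues $\{1,3,5,7,9,11\}$ works, because a sum of three odd integers is odd and hence not divisible by $12$. For $C_{10}$ and $C_{11}$ one verifies by direct enumeration that $\{1,2,4,6,8,9\}$ and $\{2,3,4,7,8,9\}$ are 3-zero-sum free of size $6$.

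The upper bound $g^3(C_n)\le\ell$ is handled as follows. For $C_4$ and $C_5$ the value $\ell$ equals $|C_n|$, so the only $\ell$-subset is the whole group, which plainly contains the zero-sum triple $\{0,1,n-1\}$. For $C_7$ and $C_{11}$ the bound follows directly from Theorem \ref{ti}. The remaining cases $C_6, C_8, C_9, C_{10}$ and $C_{12}$ require a finite check: enumerate the 3-element zero-sum subsets of $C_n$ and show that no subset of $C_n$ of size $|C_n|-\ell+1$ can meet every one of them. For $C_6$ and $C_8$ this is short, since the corresponding hypergraphs have only $4$ and $7$ edges. For $C_9$ and $C_{10}$ a degree-counting argument suffices: in $C_{10}$, for instance, one first shows $0\notin S$ for a hypothetical 3-zero-sum free $S$ of size $7$ (since $0\in S$ would force $S$ to miss every antipodal pair, whence $|S|\le 6$), then checks that among the $8$ zero-sum triples in $\{1,\ldots,9\}$ no two elements have degree sum at least $8$, so no $2$-vertex cover exists.

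The main obstacle is $C_{12}$, where the hypergraph of zero-sum triples has $14$ edges among $\{1,\ldots,11\}$ together with $5$ further edges through $0$. I would again split on whether $0\in S$ for a hypothetical 3-zero-sum free $S$ of size $7$. If $0\in S$, then $S\setminus\{0\}$ must avoid every antipodal pair $\{a,-a\}$, which forces it to contain $6$ (the unique self-antipodal nonzero element) and exactly one representative from each of the pairs $\{\pm 1\},\{\pm 2\},\{\pm 3\},\{\pm 4\},\{\pm 5\}$; a short inspection of the resulting sign patterns (constrained by $\varepsilon_i+\varepsilon_j+\varepsilon_k\not\equiv 0\pmod{12}$ and $\varepsilon_i+\varepsilon_j\not\equiv 6\pmod{12}$) shows none of them is 3-zero-sum free. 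If $0\notin S$, then $S\subset\{1,\ldots,11\}$; since there are only $6$ odd and $5$ even nonzero residues, $S$ contains between $2$ and $6$ odd residues, and a parity-based case analysis, using the pairs of odd residues summing to $-x\bmod 12$ for each even $x\in S$, produces the desired zero-sum triple in every subcase.
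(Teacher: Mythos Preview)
Your proposal is correct and follows the same overall strategy as the paper: explicit $3$-zero-sum free sets for the lower bounds, Theorem~\ref{ti} for the upper bounds at $C_7$ and $C_{11}$, and finite case analysis for the remaining upper bounds. The tactical choices differ in a few places worth noting.

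For lower bounds you pick different (and in two cases more conceptual) witnesses: for $C_{12}$ you use the set of odd residues with a one-line parity argument, and for $C_9$ the complement of $\langle 3\rangle$ with a coset argument, whereas the paper simply lists $\{1,2,5,7,10,11\}$ and $\{1,2,4,5,7,8\}$ and leaves verification implicit. For the upper bounds at $C_8$, $C_9$, and $C_{10}$ you frame the problem as showing that the complement of $S$ cannot be a vertex cover of the hypergraph of zero-sum triples, and finish by a degree-sum count; the paper instead argues directly using a single disjoint pair of zero-sum triples (e.g.\ $\{1,2,5\}$ and $\{3,6,7\}$ in $C_8$) together with the observation that $0\in S$ forces an antipodal pair. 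Your cover argument is cleaner and generalises better; the paper's disjoint-pair trick is shorter in each individual case.

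For $C_{12}$ the two case analyses diverge more substantially. The paper exploits that $3\cdot 4\equiv 0$: once the case $0\in S$ is settled, any $S$ containing $\pm 4$ is handled by translating by $\mp 4$, which immediately reduces to the previous case. This leaves only $S\cap\{0,4,8\}=\emptyset$, a much smaller search that the paper finishes by splitting on whether $6\in S$. Your plan instead runs a parity-based case split on the number of odd residues in $S$. That route certainly works, but it is longer, and as written your $0\notin S$ subcase is only a sketch; the paper's translation trick is the one genuinely labour-saving idea you are missing.
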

\begin{proof}
For all cases we will use the following idea: we write $C_n=B \cup  A \cup -A,$ where $B=\{x\in C_n\ | \ 2x=0\}.$ \\ 
\begin{enumerate}
\item It is clear to see that $g^3(C_4)=4$ and $g^3(C_6)=g^3(C_5)=5.$ Now, $g^3(C_7)=5,$ by Theorem \ref{ti}.
\item The subsets $S_1=\{1,2,4,6,7\}\subset C_8$ and $S_2=\{1,2,4,5,7,8\}\subset C_9$ are $3$-zero-sum free sets. Hence $g^3(C_8)\ge6$ and $g^3(C_9)\ge7.$ Now, we consider any subset $S\subset C_8 (C_9)$ of size $6$ $(7).$ In both cases, if $0\in S,$ we get a $3$-zero sum subset in $C_8 (C_9),$ since $|A|=|-A|=3$ for $C_8$ and $|A|=|-A|=4$ for $C_9$ implies that there is $x\in S$ such that $-x\in S.$ If $0\not\in S$ we consider the representation of 8 (9) and 16 (18) as sum of three distinct elements of $C_8 (C_9).$ 
We observe that $\{1,2,5\}\cap\{3,6,7\}=\emptyset$ ($\{1,2,6\}\cap\{3,7,8\}=\emptyset).$ By Pigeon hole principle one can easily see that $C=\{1,2,5\} (C = \{1,2,6\})\subset S$ or $-C (-C)\subset S$ for any subset $S\subset C_8 (C_9)$ of size $6$ $(7),$ with $0\not\in S.$ Therefore, we get a $3$-zero-sum subset of $S\subset C_8 (C_9).$     
\item Note that $S=\{2,3,4,6,7,8\}\subset C_{10}$ is a $3$-zero-sum free set. Hence $g^3(C_{10})\ge 7.$ Let $S$ be subset of $C_{10}$ of size $7.$ If $0\in S$ then it is clear that $a,-a\in S\setminus \{0,5\}$. Hence $\{0,a,-a\}$ is a $3$-zero-sum subset of our interest. Assume that $0\not\in S$. If $5\not\in S$ then the fact that $\{1,2,7\}\cap \{9,8,3\}=\emptyset$ implies the existence of a $3$-zero-sum subset. Next we assume that $5\in S$. Observe that $\{1,4\},\{3,2\} \mbox{ and }\{7,8\}$ are pairwise disjoint one can conclude that there is a $3$-zero-sum subset.

For $C_{11},$ notice that $S=\{3,4,5,6,7,8\}$ is a $3$-zero-sum free set. Hence $g^3(C_{11})\ge 7.$ Now, by Theorem \ref{ti} $g^3(C_{11})\le 7.$
%

For $C_{12},$ notice that $S=\{1,2,5,7,10,11\}$ is a $3$-zero-sum free set. Hence $g^3(C_{12})\ge 7.$ Let $0\in S\subset C_{12}$ be any set of 7 elements. 

As $0\in S$ then it is clear that $\{a,-a\}\subset S\setminus \{0,6\}$ or $\{\epsilon_11,\epsilon_22,\dots,\epsilon_55\}\subset S,$ where $\epsilon_i\in\{-1,1\}$ for $i\in\{1,2,\dots,5\}.$ If $\{a,-a\}\subset S,$ we get $\{0,a,-a\}$ to be a $3$-zero-sum subset. 

On the contrary, we will divide the proof in two cases. Clearly, in this case $6\in S$. If $\{\epsilon_11,\epsilon_15\}$ or $\{\epsilon_22,\epsilon_24\}$ are contained in $S$ then we are done. So we may assume on the contrary.\\
\textbf{Case 1.} Suppose $\{1, -5\}\subset S$. If $4\in S,$ then $\{1,4,-5\}$ is a $3$-zero-sum subset of $S$. Otherwise $\{2,-4\}\subset S.$ If $-3\in S,$ then $\{1,2,-3\}$ is a $3$-zero-sum subset of $S$. Otherwise $3\in S$ and $\{1,3,-4\}$ is a $3$-zero-sum subset of $S$.\\
\textbf{Case 2.} Suppose $\{-1, 5\}\subset S$. As we argued in Case 1, one can easily see that $\{-1,-4,5\}$
or $\{-1,-2,3\}$ or $\{-1,-3,4\}$ is a $3$-zero-sum subset.\\


If $a\in S,$ with $a\in\{4,-4\},$ then consider a set $T=S-a$. Notice that, $0\in T$. As we have seen above there is a $3$-zero-sum subset of $T,$ since $3a=0.$ Assume from now on $S\cap \{0,4,-4\}=\emptyset.$ Suppose $6\in S$. If $\{1,5\}\subset S$ then
$\{6,1,5\}$ is a $3$-zero-sum subset. If 
$-\{1,5\}\subset S,$ then
$\{6,-1,-5\}$ is a $3$-zero-sum subset. Hence $S$ must be one among the $\{6,1,2,3,-1,-2,-3\}$ or 
$\{6,1,2,3,-5,-2,-3\}$ or $\{6,5,2,3,-1,-2,-3\}$
 or $\{6,5,2,3,-5,-2,-3\}$. In any case it is clear to see a $3$-zero-sum subset. 
Suppose $6\not\in S$. Clearly $\{\epsilon_11,\epsilon_22,\epsilon_33,\epsilon_55\}\subset S,$  where $\epsilon_i\in\{-1,1\}$ for $i\in\{1,2,3,5\}.$ Now, $S=\{\pm1,\pm2,\pm3,\pm5\}\backslash\{x\},$ where $x=-\epsilon_ii$ for some $i\in\{1,2,3,5\}.$ Therefore, there is a $3$-zero-sum subset of $S.$

\end{enumerate}
\end{proof}

\begin{proposition}
We have $g^3(C_{18})=10.$
\end{proposition}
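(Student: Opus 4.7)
The plan is to establish $g^3(C_{18}) \geq 10$ and $g^3(C_{18}) \leq 10$ separately. The lower bound is immediate: the set $\{1, 3, \dots, 17\}$ of nine odd elements of $C_{18}$ is $3$-zero-sum free because three odd integers cannot sum to a multiple of $18$; this is precisely the witness produced by Proposition \ref{lower} applied to $C_{18}$.

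For the upper bound, fix $S \subset C_{18}$ with $|S| = 10$ and write $S = E \sqcup O$, where $E$ consists of the even elements of $S$ and $O$ of the odd ones. Since the sum of three odd integers is odd, any $3$-zero-sum subset of $S$ consists either of three evens or of two odds plus one even. The subgroup $H = 2C_{18}$ is isomorphic to $C_9$, so if $|E| \geq 7 = g^3(C_9)$ a zero-sum triple of the first type already lies inside $E$ and we are done. Hence we may assume $|E| \leq 6$ and $|O| \geq 4$, and seek a triple of the second type.

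Identify $H \cong C_9$ via $2k \leftrightarrow k$, and set $O' = \{k \in C_9 : 2k + 1 \in O\}$ and $E' = \{j \in C_9 : 2j \in E\}$. A relation $a + b + c \equiv 0 \pmod{18}$ with $a, b \in O$ distinct and $c \in E$ is equivalent, after dividing by $2$, to
\[
P' \cap (-E' - 1) \neq \emptyset, \qquad P' := \{k_1 + k_2 : k_1, k_2 \in O',\ k_1 \neq k_2\}.
\]
Since $-E' - 1$ has size $|E|$, inclusion--exclusion in $C_9$ reduces the task to proving $|P'| \geq 10 - |E|$. For each $s \in C_9$ set $T_s = O' \cap (s - O')$ and let $p(s)$ count unordered pairs in $O'$ summing to $s$; one checks that $|T_s| = 2p(s)$ when $s/2 \notin O'$ and $|T_s| = 2p(s) + 1$ otherwise, while $|T_s| \geq 2|O'| - 9$ by inclusion--exclusion.

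For $|O'| \geq 6$ this forces $|T_s| \geq 3$, hence $p(s) \geq 1$ for every $s$, so $|P'| = 9$; this handles $|O| \in \{6, 7, 8, 9\}$. For $|O'| = 5$ one gets $p(s) \geq 1$ at the four values with $s/2 \notin O'$, and the required fifth value follows from $\sum_s |T_s| = 25$: if every $s$ with $s/2 \in O'$ had $|T_s| = 1$, the remaining four values would each satisfy $|T_s| = 5$, i.e.\ $s - O' = O'$, making $x \mapsto s - x$ a fixed-point-free involution on the odd-cardinality set $O'$, which is impossible. For $|O'| = 4$, the only way $|P'| \leq 3$ could occur is the distribution $(2,2,2)$, realising a $1$-factorization of $K_4$ whose three factors all have the same sum, but subtracting $a + b = c + d$ from $a + c = b + d$ gives $2(b - c) = 0$ in $C_9$, which has no non-zero solution. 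The main technical hurdle is these low-cardinality cases $|O'| \in \{4, 5\}$, where the bare inclusion--exclusion bound on $|T_s|$ is too weak and must be refined using the parity obstruction to fixed-point-free involutions on odd-size sets.
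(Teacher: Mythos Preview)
Your proof is correct, and it takes a genuinely different route from the paper's. The paper obtains the lower bound the same way (via Proposition~\ref{lower}), but for the upper bound it performs an exhaustive case analysis on whether the elements $0,6,-6,9$ lie in $S$: first it handles $0\in S$ directly, then reduces $\pm6\in S$ to that case by translation (since $3\cdot(\pm6)=0$), and finally treats $S\cap\{0,6,-6\}=\emptyset$ with a further split on $9\in S$ and on which of $\{1,2,3,4\}$ belong to $S$, checking many small configurations by hand.

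Your argument is more structural: the even/odd decomposition immediately isolates the only two shapes a zero-sum triple can have, the case $|E|\ge7$ is dispatched by the already-established value $g^3(C_9)=7$, and the remaining cases reduce to a restricted-sumset lower bound $|P'|\ge 10-|E|$ in $C_9$, which you prove with clean counting (the identity $\sum_s|T_s|=|O'|^2$, parity of $|T_s|$, and the $K_4$ one-factorization obstruction). This buys you a proof that scales conceptually and reuses prior results, whereas the paper's approach is self-contained but ad hoc. One phrasing nit: in the $|O'|=4$ case, ``whose three factors all have the same sum'' should read ``each of whose three factors consists of two edges with equal sum''; the subsequent equations $a+b=c+d$, $a+c=b+d$ make your intent clear, and the conclusion $2(b-c)=0$ in $C_9$ is correct.
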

\begin{proof}
According to Proposition \ref{lower}, $g^3(C_{18})\geq10.$ Now, we consider $S$ a subset of $C_{18}$ of size 10. If $0\in S$ then it is clear that $\{a,-a\}\subset S\setminus \{0,9\}$ or $\{\epsilon_11,\epsilon_22,\dots,\epsilon_88\}\subset S,$ where $\epsilon_i\in\{-1,1\}$ for $i\in\{1,2,\dots,8\}.$ If $\{a,-a\}\subset S,$ we get $\{0,a,-a\}$ to be a $3$-zero-sum subset. 

On the contrary, we will divide the proof in two cases. Clearly, in this case $9\in S$. If $\{\epsilon_11,\epsilon_18\}$ or $\{\epsilon_22,\epsilon_27\}$ or $\{\epsilon_33,\epsilon_36\}$ or $\{\epsilon_44,\epsilon_45\}$ are contained in $S,$ then we are done. So we may assume on the contrary.\\
\textbf{Case 1.} Suppose $\{1, -8\}\subset S$. If $7 \in  S$ then $\{1, -8, 7\}$ is a $3$-zero-sum subset. Otherwise $\{2, -7\}\subset S$. If $5\in S$ we are done. Otherwise $\{4, -5\}\subset S$ and $\{1, 4, -5\}$ is a $3$-zero-sum subset.\\
\textbf{Case 2.} Suppose $\{-1,8\}\subset S.$ As we argued in Case 1, one can easily see that $\{-1,8,-7\}$
or $\{-5,-2,7\}$ or $\{5,-4,-1\}$ is a $3$-zero-sum subset.

If $a\in S,$ with $a\in\{6,-6\},$ then consider the set $T=S-a$. Notice that, $0\in T$. As we have seen above there is a $3$-zero-sum subset of $T,$ since $3a=0.$ 
Assume from now on $S\cap \{0,6,-6\}=\emptyset.$ If $\{1,2,3,4,5,7,8\}\subset S$ or  $-\{1,2,3,4,5,7,8\}\subset S,$ then $\{3,7,8\}$ or $\{-3,-7,-8\}$ is a $3$-zero-sum subset of $S.$ Suppose $9\in S$. As  $\pm1\pm8=\pm9,$ $\pm2\pm7=\pm9$ and $\pm4\pm5=\pm9$ and $|S|=10,$ it follows that we get a zero-sum subset of our interest. Suppose $9\not\in S$. If $\{1,2,3,4\}\subset S,$ then $S\cap \{-3,-4,-5,-7\}=\emptyset.$ As $|S\cap\{3,7,8\}|<3,$ we have a contradiction since $|S|=10.$ If $\{1,3,4\}\subset S$ and $2\not\in S,$ then $S\cap \{-4,-5,-7\}=\emptyset$ and $|S\cap\{3,7,8\}|<3,$ a contradiction since $|S|=10.$ If $\{2,3,4\}\subset S$ and $1\not\in S,$ then $S\cap \{-5,-7\}=\emptyset,$ $|S\cap\{3,7,8\}|<3,$ and $|S\cap\{-1,-2,-3\}|<3,$ a contradiction since $|S|=10.$ If $\{1,2,4\}\subset S$ and $3\not\in S,$ then $S\cap \{-3,-5\}=\emptyset,$ $|S\cap\{-1,-7,8\}|<3$ and $|S\cap\{1,7,-8\}|<3$ a contradiction since $|S|=10.$ If $\{1,2,3\}\subset S$ and $4\not\in S,$ then $S\cap \{-3,-4\}=\emptyset,$ $|S\cap\{-1,-7,8\}|<3$ and $|S\cap\{1,7,-8\}|<3$ a contradiction since $|S|=10.$
Suppose $\{i,j\}\subset\{1,2,3,4\},$ with $i,j\not\in S.$ Thus, there is $U\subset-\{1,2,3,4,5,7,8\}$ such that $|U|\geq 5$ and $U\subset S.$ As $\{-3,-7,-8\}$ is a $3$-zero-sum subset, it follows that $|U\cap\{-3,-7,-8\}|<3,$ i. e., $|U|=5 \mbox{ or }6.$ Now, if $|S\cap\{5,7\}|=2,$ then $|U\cap \{-1,-2,-4,-5\}|<3$ a contradiction since $|S|=10.$ If $|S\cap \{5,7\}|=1,$ then $|U\cap \{-1,-2,-4,-5\}|<4$ a contradiction since $|S|=10.$ If $|S\cap \{5,7\}|=0,$ we have a contradiction since $|S|=10.$   

\end{proof}

For $k=4,$ we obtain the bounds below for the group $C_n.$

\begin{proposition} \label{l2}
We have $\left\lceil\frac{n+15}{4} \right\rceil \le g^4(C_n)\le\left\lfloor \frac{n+6}{2} \right\rfloor,$ where $n$ is an odd integer with $n\ge 5.$ 

\end{proposition}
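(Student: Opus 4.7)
The plan is to prove the two bounds separately.

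For the lower bound, I would adapt the construction used in the proof of Theorem \ref{ti} (taking $k=4$, $m=2$) to general odd $n$. Setting $q = \lceil (n-1)/4 \rceil$, consider
\[S = \{0,1\} \cup \{n-q-1, n-q, \ldots, n-1\} \subset C_n,\]
which has exactly $q+3 = \lceil (n+15)/4 \rceil - 1$ elements. To check $S$ is $4$-zero-sum free, I would split any $4$-subset of $S$ by $j \in \{0,1,2\}$, the number of its elements in $\{0,1\}$. For each admissible $j$, the integer sum of the $4-j$ elements drawn from $\{n-q-1,\ldots,n-1\}$ lies in a narrow interval just below $(4-j)n$, and the contribution from $\{0,1\}$ keeps the total strictly between $(3-j)n$ and $(4-j)n$; hence it cannot be a multiple of $n$. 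The inequalities needed ($n > 2q$, $n > 3q$, and $n > 4q-2$) follow at once from $n\geq 5$ together with the definition of $q$.

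For the upper bound, let $S \subset C_n$ with $|S| = \lfloor (n+6)/2 \rfloor = (n+5)/2$. Since $n$ is odd, $C_n \setminus \{0\}$ partitions into $(n-1)/2$ antipodal pairs $\{x,-x\}$. Let $q$ denote the number of these pairs entirely contained in $S$ and $s$ the number contributing exactly one element. Then $2q + s = |S \setminus \{0\}|$ and $q+s \leq (n-1)/2$, which combine to give
\[q \geq |S \setminus \{0\}| - \frac{n-1}{2} \geq \frac{n+3}{2} - \frac{n-1}{2} = 2,\]
where the bound $|S \setminus \{0\}| \geq (n+3)/2$ holds whether or not $0 \in S$. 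Any two distinct antipodal pairs $\{a,-a\}, \{b,-b\} \subset S$ automatically satisfy $a,b \neq 0$ and $a \neq \pm b$, so $\{a,-a,b,-b\}$ is a $4$-element zero-sum subset of $S$.

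The main obstacle is the range bookkeeping in the lower-bound verification when $q$ is small: for $n = 5$ only $j = 2$ is available and for $n = 7$ only $j \in \{1,2\}$, so one must confirm that the omitted subcases are genuinely vacuous (too few elements in the upper block to choose). The upper bound is essentially immediate once the antipodal involution $x \mapsto -x$ is exploited.
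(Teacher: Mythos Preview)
Your proof is correct. The upper bound argument---counting antipodal pairs $\{x,-x\}$ via pigeonhole---is exactly what the paper does, just spelled out in more detail.

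For the lower bound you take a genuinely different (though closely related) route. The paper constructs a large low block together with a single high element,
\[
S=\{0,1,\dots,\tfrac{n+3}{4},\,n-2\}\quad(n\equiv 1\bmod 4),\qquad
S=\{0,1,\dots,\tfrac{n+5}{4},\,n-2\}\quad(n\equiv 3\bmod 4),
\]
and checks separately the $4$-sums lying entirely in the low block and those using $n-2$. You instead transplant the $k=4$, $m=2$ construction from Theorem~\ref{ti}: a two-element low block $\{0,1\}$ together with a long high block $\{n-q-1,\dots,n-1\}$, and split according to how many low elements are used. Both sets have the same size $\lceil(n+15)/4\rceil-1$, and the required inequalities ($n>2q$, $n>3q$, $n>4q-2$) are immediate from $q=\lceil(n-1)/4\rceil$. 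Your version has the mild advantage of reusing Theorem~\ref{ti} verbatim rather than introducing a new set; the paper's version keeps the analysis to two subcases instead of three. Your remark that the $j=0$ and $j=1$ cases are vacuous for $n=5,7$ (the high block being too short) is accurate and worth stating explicitly in a final write-up.
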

\begin{proof}
We consider the set
\[
S=
\begin{cases}
\{0,1,\dots,\frac{n+3}{4},n-2\}, \mbox{ if } n\equiv1\pmod4 \\
\{0,1,\dots,\frac{n+5}{4},n-2\}, \mbox{ if } n\equiv3\pmod4.
\end{cases}
\]

Set $S'=S\setminus\{n-2\}$. Suppose $n\equiv 1\pmod 4$. Note that any subset of $S'$ of size 4 has sum $t$ with $1\le t\le n-3$. Also, note that any subset of $S'$ of size 3 has sum $t$ with $3\le t\le \frac{3(n+3)}{4}-3$.
Hence any subset of $S$ of size 4 has sum $t$ with $n+1\le t\le 2n-1$ or $1\le t\le n-3$.

Suppose $n\equiv 3\pmod 4$. Note that any subset of $S'$ of size 4 has sum $t$ with $1\le t\le n-1$. Also, note that any subset of $S'$ of size 3 has sum $t$ with $3\le t\le \frac{3(n+5)}{4}-3$.
Hence any subset of $S$ of size 4 has sum $t$ with $n+1\le t\le 2n-1$ or $1\le t\le n-1$.
Hence $g^4(C_n)\ge |S|+1$. Observe that, $|S|+1=\left\lceil\frac{n+15}{4} \right\rceil$. Hence we are done.

Next, we shall prove $\left\lfloor \frac{n+6}{2} \right\rfloor \ge g^4(C_n)$. Write $C_n=\{0\}\cup A\cup -A$. By Pigeon hole principle, given a set $S$ of size $\left\lfloor \frac{n+6}{2} \right\rfloor$, $\{a,b,-a,-b\}\subset S$. Hence we are done.
\end{proof}

\begin{proposition} \label{l3}
We have $\left\lceil\frac{n+14}{4} \right\rceil \le g^4(C_n)\le \frac{n}{2}+3,$ where $n$  is an even integer with $n> 5.$  
\end{proposition}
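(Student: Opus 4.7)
The plan is to follow the two-pronged argument of Proposition \ref{l2}, adjusted to account for the fact that when $n$ is even the subgroup $B=\{x\in C_n:2x=0\}=\{0,n/2\}$ has two elements rather than one.

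For the lower bound, I would set $m=\lfloor n/4\rfloor+1$ and take
\[
S=\{0,1,2,\ldots,m,n-2\}\subset C_n.
\]
A short case check in the two residue classes $n\equiv 0\pmod 4$ and $n\equiv 2\pmod 4$ confirms that $|S|=m+2=\lceil(n+14)/4\rceil-1$. To show $0\notin\sum_{4}(S)$, I split the 4-subsets $T\subseteq S$ according to whether $n-2\in T$. If $n-2\notin T$, then $T\subseteq S':=S\setminus\{n-2\}=\{0,1,\ldots,m\}$ and $\sigma(T)\in[6,\,4m-6]$; the choice of $m$ gives $4m-6\le n-2$, so $\sigma(T)\not\equiv 0\pmod n$. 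If $n-2\in T$, then $\sigma(T)=(n-2)+t$ where $t$ is a 3-subset sum of $S'$, hence $t\in[3,\,3m-3]$ and $\sigma(T)\in[n+1,\,n+3m-5]\subseteq[n+1,\,2n-1]$, again not a multiple of $n$. This proves $g^4(C_n)\ge |S|+1=\lceil(n+14)/4\rceil$.

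For the upper bound, I would partition $C_n=B\cup A\cup(-A)$, where $B=\{0,n/2\}$ and $A\cup(-A)$ is the union of the $(n-2)/2$ inverse pairs $\{a,-a\}$ with $2a\ne 0$. For any $S\subset C_n$ with $|S|\ge n/2+3$, at least $n/2+3-|B|=n/2+1$ elements of $S$ lie in $A\cup(-A)$. Since there are only $n/2-1$ such inverse pairs, pigeonhole forces at least $(n/2+1)-(n/2-1)=2$ pairs to lie entirely inside $S$. Choosing two distinct such pairs $\{a,-a\},\{b,-b\}\subseteq S$ with $a\ne\pm b$ gives four pairwise distinct elements of $S$ whose sum is $0$, proving $g^4(C_n)\le n/2+3$.

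The main obstacle is making the lower bound case analysis uniform across $n\pmod 4$: one must verify that the two inequalities $4m-6\le n-2$ and $n+3m-5\le 2n-1$ both hold for $m=\lfloor n/4\rfloor+1$ in each parity of $n/2$, but these reduce to elementary inequalities that are valid for all $n\ge 6$. The upper bound is a clean pigeonhole count with no case split, and the only subtlety there is that the two pairs one extracts are automatically distinct because they come from disjoint classes in the partition.
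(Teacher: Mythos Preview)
Your proof is correct and follows essentially the same approach as the paper: the same zero-sum free set $S=\{0,1,\ldots,\lfloor n/4\rfloor+1,\,n-2\}$ (which coincides with the paper's two cases) is used for the lower bound, and the same partition $C_n=\{0,n/2\}\cup A\cup(-A)$ with the pigeonhole argument yields the upper bound. Your write-up is in fact slightly more detailed than the paper's, particularly in spelling out the pigeonhole count for the upper bound.
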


\begin{proof}
We consider the set
\[
S=
\begin{cases}
\{0,1,\dots,\frac{n+4}{4},n-2\}, \mbox{ if } n\equiv0\pmod4 \\
\{0,1,\dots,\frac{n+2}{4},n-2\}, \mbox{ if } n\equiv2\pmod4.
\end{cases}
\]

Set $S'=S\setminus\{n-2\}$. Suppose $n\equiv 0\pmod 4$. Note that any subset of $S'$ of size 4 has sum $t$ with $1\le t\le n-2$. Also, note that any subset of $S'$ of size 3 has sum $t$ with $3\le t\le \frac{3(n+4)}{4}-3$.
Hence any subset of $S$ of size 4 has sum $t$ with $n+1\le t\le 2n-1$ or $1\le t\le n-2$.

Suppose $n\equiv 2\pmod 4$. Note that any subset of $S'$ of size 4 has sum $t$ with $1\le t\le n-4$. Also, note that any subset of $S'$ of size 3 has sum $t$ with $3\le t\le \frac{3(n+2)}{4}-3$.
Hence any subset of $S$ of size 4 has sum $t$ with $n+1\le t\le 2n-1$ or $1\le t\le n-4$.
Hence $g^4(C_n)\ge |S|+1$. Observe that, $|S|+1=\left\lceil\frac{n+14}{4} \right\rceil$. Hence we are done.

Next, we shall prove $\frac{n}{2}+3 \ge g^4(C_n)$. Write $C_n=\{0,\frac{n}{2}\}\cup A\cup -A$. By Pigeon hole principle, given a set $S$ of size $\frac{n}{2}+3$, $\{a,b,-a,-b\}\subset S$. Hence we are done.
\end{proof}

\begin{proposition}\label{l5}
We have $g^k(C_{2n})\leq 2g^k(C_n)-1,$ where $k$ is an even number and $n\geq k$ is an odd number.
\end{proposition}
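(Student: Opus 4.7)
The plan is to exploit the decomposition $C_{2n}\cong C_2\oplus C_n$, which is available precisely because $n$ is odd (so $\gcd(2,n)=1$). Write each element of $C_{2n}$ uniquely as a pair $(\varepsilon,y)$ with $\varepsilon\in C_2$ and $y\in C_n$, and let $\pi_1,\pi_2$ be the two coordinate projections. Given a set $S\subset C_{2n}$ of size $2g^k(C_n)-1$, partition it according to the first coordinate:
\[
S_0=\{s\in S:\pi_1(s)=0\},\qquad S_1=\{s\in S:\pi_1(s)=1\}.
\]
Let $X=\pi_2(S_0)$ and $Y=\pi_2(S_1)$. Since the representation is unique, $\pi_2$ is injective on each $S_i$, so $|X|=|S_0|$ and $|Y|=|S_1|$, and $|X|+|Y|=2g^k(C_n)-1$.

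By pigeonhole, at least one of $|X|,|Y|$ is $\geq g^k(C_n)$, for otherwise $|X|+|Y|\leq 2g^k(C_n)-2$. In either case, the larger projection contains a subset $T'$ of size $k$ whose elements sum to $0$ in $C_n$, by the very definition of $g^k(C_n)$.

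Now lift $T'$ back to a subset $T$ of $S_0$ or $S_1$ accordingly. The second coordinate of $\sigma(T)$ is $\sigma(T')=0\in C_n$ by construction. For the first coordinate, if $T\subset S_0$ we are summing $k$ copies of $0$, and if $T\subset S_1$ we are summing $k$ copies of $1\in C_2$; in both cases the result is $0\in C_2$, using crucially that $k$ is even. Hence $\sigma(T)=(0,0)=0$ in $C_{2n}$ and $|T|=k$, which gives the required $k$-zero-sum subset of $S$ and establishes $g^k(C_{2n})\leq 2g^k(C_n)-1$.

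There is no real obstacle here; the argument is a clean coordinate split followed by pigeonhole, with the parity of $k$ doing the work of killing the $C_2$ component. The hypotheses $n$ odd and $k$ even are each used exactly once and cannot be dropped: $n$ odd is needed for the product decomposition $C_{2n}\cong C_2\oplus C_n$, and $k$ even is needed to guarantee that summing $k$ copies of $1\in C_2$ vanishes.
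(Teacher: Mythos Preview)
Your proof is correct and follows essentially the same approach as the paper: decompose $C_{2n}\cong C_2\oplus C_n$, apply the pigeonhole principle to the $C_2$-coordinate to find at least $g^k(C_n)$ elements sharing a common first coordinate, extract a $k$-zero-sum subset in the $C_n$-projection, and use that $k$ is even to annihilate the $C_2$-part. Your write-up is in fact more explicit than the paper's about why $\pi_2$ is injective on each fiber and where each hypothesis is used.
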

\begin{proof}
Let $S\subset C_{2n}$ with $|S|=2g^k(C_n)-1$. Write \[C_{2n}=C_2\oplus C_n.\]
By Pigeon hole principle, there exists a subset $S_1\subset S$ where $S_1=\{(a,g_i):1\le i\le l\}$ with $l\ge  g^k(C_n)$.
Clearly, $g_i\neq g_j$ for $i\neq j$. Hence there exists a $k$-zero-sum subset $T|\prod_1^l g_i.$ Since $k$ is even $\{(a,g_i):g_i|T\}$ is a $k$-zero-sum subset of $S$.
\end{proof}
\begin{corollary}\label{co1}
We have $\dfrac{p+7}{2}\leq g^4(C_{2p})\leq 2\left\lceil\dfrac{p+15}{4}\right\rceil-1,$ where $p>3$ is an odd prime number.
\end{corollary}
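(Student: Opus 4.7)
The plan is straightforward, since the corollary is a direct combination of results already established in the paper. Both inequalities follow by substitution into previous propositions, so no new combinatorial argument is required; the only care needed is checking that the hypotheses of those propositions are met and that the ceilings simplify correctly under the parity assumption on $p$.

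For the upper bound, I would apply Proposition \ref{l5} with $k=4$ and $n=p$. The hypotheses of Proposition \ref{l5} demand that $k$ is even (it is, $k=4$) and that $n\geq k$ is odd; since $p>3$ is an odd prime, we have $p\geq 5>4$, so both conditions hold. This yields
\[
g^4(C_{2p})\leq 2g^4(C_p)-1.
\]
Invoking the Remark following Theorem \ref{ti}, which asserts $g^4(C_p)=\left\lceil\frac{p+15}{4}\right\rceil$ for every prime $p$, gives immediately $g^4(C_{2p})\leq 2\left\lceil\frac{p+15}{4}\right\rceil-1$.

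For the lower bound, I would apply Proposition \ref{l3} with $n=2p$. Since $p>3$, we have $2p\geq 10>5$, and $2p$ is even, so the hypothesis is satisfied. Proposition \ref{l3} yields
\[
g^4(C_{2p})\geq\left\lceil\frac{2p+14}{4}\right\rceil=\left\lceil\frac{p+7}{2}\right\rceil.
\]
Because $p$ is odd, $p+7$ is even, so the ceiling disappears and the right-hand side equals $\frac{p+7}{2}$, giving the claimed lower bound.

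There is essentially no obstacle here; the only pitfalls are verifying the parity condition needed to drop the ceiling in the lower bound and confirming that $p>3$ (rather than merely $p>2$) guarantees both $p\geq k=4$ in Proposition \ref{l5} and $2p>5$ in Proposition \ref{l3}. Both checks are immediate.
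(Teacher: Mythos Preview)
Your proof is correct and follows exactly the paper's approach: the lower bound from Proposition~\ref{l3} with $n=2p$ and the upper bound from Proposition~\ref{l5} combined with the value $g^4(C_p)=\left\lceil\frac{p+15}{4}\right\rceil$ supplied by Theorem~\ref{ti}. Your additional verification of the hypotheses and the parity simplification of the ceiling are helpful details that the paper's terse proof omits.
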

\begin{proof}
The lower bound is given by Proposition \ref{l3} and upper bound is given by Theorem \ref{ti} and Proposition \ref{l5}.
\end{proof}
\begin{rem}
For $p=5$ or $7,$ the Proposition \ref{l3} provides $g^4(C_{10})\leq 8$ and $g^4(C_{14})\leq 10.$ These upper bounds are better than upper bounds provides by Corollary \ref{co1}. 
\end{rem}

\end{document}